\documentclass[11pt,a4paper]{article}
\usepackage{amsmath}
\usepackage{amssymb}
\usepackage{amsthm}
\usepackage{geometry}
\usepackage[colorlinks=true,citecolor=black,linkcolor=black,urlcolor=black]{hyperref}
\usepackage{amsrefs}
\usepackage[english]{babel}
\theoremstyle{plain}
\newtheorem{theorem}{Theorem}[section]
\newtheorem{proposition}[theorem]{Proposition}
\newtheorem{lemma}[theorem]{Lemma}
\newtheorem{corollary}[theorem]{Corollary}
\numberwithin{equation}{section}
\newcommand{\cO}{\mathcal O}
\DeclareMathOperator{\Nm}{N}
\DeclareMathOperator{\vol}{vol}
\DeclareMathOperator{\covol}{covol}
\title{Covering Projective Height Balls by Subspaces in Rigid Adelic Spaces}
\author{Ruida Di, Runjie Hu}
\date{}
\newcommand{\Addresses}{{
  \bigskip
  \footnotesize

   Ruida Di, \textsc{Morningside Center of Mathematics, Academy of Mathematics and Systems Science, Chinese Academy of Sciences, No.~55 Zhongguancun East Road, Beijing 100190, China.}\par\nopagebreak
  \textit{E-mail address:\ }\texttt{drdmath@amss.ac.cn}

  Runjie Hu, \textsc{Department of Mathematics, 3368 Texas A\&M University, College Station, TX 77843, USA.}\par\nopagebreak
  \textit{E-mail address:\ }\texttt{runjie.hu@tamu.edu}
}}
\begin{document}
\maketitle
\begin{abstract}
Let $E$ be an $n$-dimensional rigid adelic space over a number field $K$. We study the minimum number $g_E(R)$ of proper $K$-subspaces needed to cover the projective height ball of radius $R$, together with the maximum cardinality $h_E(R)$ of a subset in linear general position. We show that, once $R$ is sufficiently large compared with the last Roy--Thunder minimum of $E$, both quantities have order $\Psi_E(R)^{[K:\mathbb Q]}$, where $\Psi_E(R)$ is an explicit expression in the Roy--Thunder minima. The comparison constants are effectively computable and uniform in $E$. For the standard adelic space $K^n$, this gives order $R^{[K:\mathbb Q]n/(n-1)}$.
\end{abstract}

\medskip
\noindent\textbf{2020 Mathematics Subject Classification.} Primary 11H06; Secondary 11G50.

\noindent\textbf{Keywords.} Heights, covering by subspaces, rigid adelic spaces, successive minima, geometry of numbers.

\section{Introduction}

The goal of this paper is to study coverings of projective points of bounded height by proper linear subspaces over number fields. We also consider the complementary extremal problem of finding large subsets in linear general position.

The corresponding lattice problem was studied by B{\'a}r{\'a}ny, Harcos, Pach, and Tardos \cite{BHPT}*{Theorems~1 and~2}. Let $\mathcal C\subset\mathbb R^n$ be a compact convex body symmetric about the origin. For $1\le i\le n$, define its $i$-th successive minimum with respect to $\mathbb Z^n$ by
\[
\lambda_i
=
\inf\left\{
\lambda>0:
\lambda\mathcal C\text{ contains $i$ linearly independent vectors of }\mathbb Z^n
\right\}.
\]
Thus $\lambda_1\le\cdots\le\lambda_n$. They compared the minimum number of proper linear subspaces needed to cover $\mathcal C\cap\mathbb Z^n$ with the largest cardinality of a subset in which every $n$ distinct points are linearly independent. When $\lambda_n$ is bounded away from $1$, both quantities are controlled, up to constant factors, by
\[
\min_{1\le m<n}(\lambda_m\cdots\lambda_n)^{-1/(n-m)}.
\]
The factor $1-\lambda_n$ in their lower bound shows that the boundary regime $\lambda_n\to1$ requires separate attention. Related covering problems for lattice points were studied by Bezdek--Hausel \cite{BezdekHausel}, Bezdek--Litvak \cite{BezdekLitvak}, Balko--Cibulka--Valtr \cite{BalkoCibulkaValtr}, and Sudakov--Tomon \cite{SudakovTomon}.

On the arithmetic side, the counting of points of bounded height in projective space goes back to Schanuel \cite{Schanuel}; see also Christensen--Gubler \cite{ChristensenGubler} and Ange \cite{Ange} for relative and adelic variants. Fukshansky \cite{FukshanskyAdditional} studied a complementary avoidance problem: given a subspace over a number field and finitely many prescribed proper subspaces, he obtained an explicit small-height point outside their union, using a counting estimate in adelic cubes. Gaudron developed adelic geometry-of-numbers estimates and generalized Siegel lemmas \cite{Gaudron2009}.

Our problem is different from these avoidance results. The proper subspaces are not prescribed in advance. Instead, we ask for the minimum number of proper $K$-subspaces whose union contains the entire projective height ball, and simultaneously for the largest subset of that ball in linear general position. Moreover, a projective height ball over a number field is not obtained by intersecting a single lattice with one fixed convex body. Finite-place integrality, ideal classes, and the unit group enter together. The Roy--Thunder minima provide the analogue of successive minima, but additional arithmetic input is needed to pass between height bounds and integral representatives.

Let $K$ be a number field of degree $d=[K:\mathbb Q]$, and let $E$ be an $n$-dimensional rigid adelic space over $K$. Our main result shows that, once the radius is sufficiently large compared with $\Lambda_n(E)$, the covering number and the general-position extremal quantity have the same order. The common scale is an explicit minimum of expressions involving all the Roy--Thunder minima, and the comparison constants are effective and uniform over $E$.

\subsection{Main results}

Let $K$ have signature $(r_1,r_2)$ and degree
\[
d=[K:\mathbb Q]=r_1+2r_2.
\]
Let $D_K$ and $R_K$ denote the discriminant and regulator of $K$, with the convention $R_K=1$ when the unit rank $r_1+r_2-1$ is zero. For every place $v$, let $n_v$ be the local degree: $n_v=[K_v:\mathbb Q_p]$ if $v\mid p$, $n_v=1$ if $v$ is real, and $n_v=2$ if $v$ is complex. For $v\mid p$, let $|\cdot|_v$ be the extension to $K_v$ of the $p$-adic absolute value normalized by $|p|_p=p^{-1}$; at the archimedean places we use the usual absolute value. With these conventions,
\[
\prod_v |a|_v^{n_v}=1
\qquad(a\in K^\times).
\]

Let $E$ be a rigid adelic space over $K$ of dimension $n\ge2$ in the sense of Gaudron \cite{Gaudron}*{Definition~2}. Write $\|\cdot\|_{E,v}$ for its local norms. For $0\ne x\in E$, define
\begin{equation}
H_E(x)=\prod_v\|x\|_{E,v}^{n_v/d}.
\end{equation}
The product formula gives $H_E(ax)=H_E(x)$ for $a\in K^\times$, so this defines a height on $\mathbb P(E)(K)$. For $P=[x]$, write $H_E(P)=H_E(x)$.

For $1\le i\le n$, let $\Lambda_i(E)$ be the Roy--Thunder successive minima \cite{RoyThunder}:
\begin{equation}
\Lambda_i(E)=\inf\left\{\max_{1\le j\le i}H_E(x_j):x_1,\ldots,x_i\in E\text{ are $K$-linearly independent}\right\}.
\end{equation}
For $R>0$, put
\begin{equation}
\mathcal B_E(R)=\{P\in\mathbb P(E)(K):H_E(P)\le R\}.
\end{equation}
After choosing a $K$-basis of $E$, the height $H_E$ is equivalent to the standard projective Weil height, since the local norms differ from the standard ones at only finitely many places. Hence this set is finite by Northcott's theorem \cite{BombieriGubler}*{Theorem~2.4.9}.

Define
\begin{equation}
g_E(R)=\min\left\{N:\mathcal B_E(R)\subset\bigcup_{j=1}^N\mathbb P(W_j)(K),\quad
0\ne W_j\subsetneq E\text{ a $K$-subspace}\right\},
\end{equation}
and
\begin{equation}
h_E(R)=\max\left\{|S|:S\subset\mathcal B_E(R),\ \text{every $n$ distinct points of $S$ span $E$}\right\}.
\end{equation}
Thus $h_E(R)$ is the largest cardinality of a subset of $\mathcal B_E(R)$ in linear general position. Since a proper $K$-subspace contains at most $n-1$ points of such a set,
\begin{equation}\label{eq:intro-hg}
h_E(R)\le(n-1)g_E(R).
\end{equation}

For $1\le m<n$, set
\begin{equation}
\Psi_{E,m}(R)=\left(\frac{R^{n-m+1}}{\Lambda_m(E)\cdots\Lambda_n(E)}\right)^{1/(n-m)},
\end{equation}
and
\begin{equation}
\Psi_E(R)=\min_{1\le m<n}\Psi_{E,m}(R).
\end{equation}
For a rigid adelic space $F$ over $K$, we use the analogous notation $H_F$, $\mathcal B_F(R)$, $g_F(R)$, and $\Lambda_i(F)$. If $\dim_KF=n$, we also write $\Psi_{F,m}(R)$ and $\Psi_F(R)$.

\begin{theorem}\label{thm:main}
There exist effectively computable constants $c_1,C_1>0$ and $B_1\ge2$, depending only on $n,d,r_2,|D_K|$, and $R_K$. For every $n$-dimensional rigid adelic space $E$ over $K$ and every $R\ge B_1\Lambda_n(E)$,
\begin{equation}\label{eq:main}
c_1\Psi_E(R)^d
\le h_E(R)
\le(n-1)g_E(R)
\le C_1\Psi_E(R)^d.
\end{equation}
\end{theorem}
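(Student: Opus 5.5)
The middle inequality $h_E(R)\le(n-1)g_E(R)$ is \eqref{eq:intro-hg}. So the plan is to reduce the two outer inequalities to the lattice theorem of B\'ar\'any--Harcos--Pach--Tardos \cite{BHPT}. The reduction has two stages: pass from the projective height ball to integral points of a single lattice, then apply \cite{BHPT} in dimension $dn$ over $\mathbb Z$. The descent from $\mathbb Z$-subspaces back to $K$-subspaces needs separate handling.

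\emph{Reduction.} By a Minkowski-type theorem for rigid adelic spaces (Roy--Thunder, Gaudron), choose linearly independent $x_1,\dots,x_n$ with $H_E(x_i)\asymp\Lambda_i(E)$. Using the finitely many ideal classes (at most $h_K\ll_{d,|D_K|}1$), fix representatives $\mathfrak a$ of bounded norm. Every $P\in\mathcal B_E(R)$ then has a representative $x$ in one of boundedly many $\cO_K$-lattices $M_{\mathfrak a}\subset E$, with finite-place norms $\|x\|_{E,v}\le 1$ up to bounded factors.

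Next rescale by a unit so that the archimedean norms are balanced, i.e.\ $\|x\|_{E,v}\asymp H_E(x)^{\ldots}$ uniformly in $v$. This uses a fundamental domain for the log-unit lattice, which costs a factor $\exp(O(R_K))$. Summing over the unit-balanced cone, the points of $\mathcal B_E(R)$ lift into $M\cap \mathcal C_R$. Here $\mathcal C_R\subset E\otimes_{\mathbb Q}\mathbb R\cong\mathbb R^{dn}$ is the symmetric convex body $\{\|x\|_{E,v}\le cR \ \forall v\mid\infty\}$. Conversely, every nonzero point of $M\cap c'\mathcal C_R$ has height $\le R$.

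The lattice successive minima $\mu_1\le\dots\le\mu_{dn}$ of $M$ relative to $\mathcal C_R$ come in blocks of $d$: each block is comparable to $\Lambda_i(E)/R$, because $\cO_K x_i$ contributes $d$ independent vectors. Here one uses a bounded-size integral basis of $\cO_K$ in the Minkowski embedding, depending on $|D_K|$. The hypothesis $R\ge B_1\Lambda_n(E)$ gives $\mu_{dn}\le 1/2$, which keeps us away from the degenerate regime $\lambda_n\to1$ in \cite{BHPT}.

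\emph{Upper bound.} By \cite{BHPT}, $M\cap\mathcal C_R$ is covered by $\ll\min_k(\mu_k\cdots\mu_{dn})^{-1/(dn-k)}$ proper $\mathbb R$-subspaces. This is not directly what we want, since these are only $\mathbb Q$-subspaces, not $K$-subspaces. I would instead redo the covering argument of \cite{BHPT} over $K$. For fixed $m$, project along $V_{m-1}=\mathrm{span}_K(x_1,\dots,x_{m-1})$ onto $E/V_{m-1}$, which has dimension $n-m+1$. The image of the height ball lies in a box with $\ll\prod_{i\ge m}(R/\Lambda_i)^d$ lattice points, counted via $\cO_K$ lattice-point counting. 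Cover these by $K$-hyperplanes through $0$: each one passes through many points, and a pigeonhole/Siegel-lemma argument shows $\ll(\text{count})^{1/(n-m)}$ hyperplanes suffice. Pulling back gives $K$-hyperplanes of $E$ containing $V_{m-1}$, for a total of $\ll\Psi_{E,m}(R)^d$. Minimizing over $m$ gives the upper bound.

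\emph{Lower bound.} I would construct a large set in general position in the style of \cite{BHPT}. Take $m$ attaining the minimum, and work in the span of $x_m,\dots,x_n$ with coefficients in $\cO_K$, of size $\le R/\Lambda_i$ at each archimedean place. Consider a moment-curve-type family $\sum_{i} t^{i-m}\,y_i$, with $t$ ranging over elements of $\cO_K$ of house $\le T$ and the $y_i$ chosen to extend the coordinates to all of $E$. Vandermonde determinants show that any $n$ distinct points span $E$, provided the lower-index coordinates are handled, e.g.\ by fixing them via distinct additional parameters. There are $\asymp T^d$ choices of $t$, and the height constraint forces $T^{i-m}\Lambda_i\lesssim R$. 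Optimizing $T$ reproduces $\Psi_{E,m}(R)$, and the minimality of $m$ guarantees the choice is consistent for all $i$.

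\emph{Main obstacle.} I expect the hardest step to be the lower-bound construction. One must obtain an exponent matching $\Psi_E$ exactly, with a general-position guarantee over $K$ and uniform constants. The difficulty is that the optimal $T$ depends on all the $\Lambda_i$ simultaneously, and the argument in \cite{BHPT} relies on a convexity (``$m$ minimal'') inequality among the minima. I would first try to transport that argument verbatim, with $\cO_K$-points of bounded house replacing integers, and verify the height estimate place by place.
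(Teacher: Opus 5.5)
\textbf{The gap is in your lower bound: the integral moment-curve family produces far too few points.}

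Take $t\in\cO_K$ with $\|t\|_\infty\le T$. This gives $\asymp T^d$ points. Your own constraint at $i=n$ forces $T^{n-m}\lesssim R/\Lambda_n(E)$. So the family has at most $\ll (R/\Lambda_n(E))^{d/(n-m)}\le (R/\Lambda_n(E))^{d}$ points.

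On the other hand, $\Psi_{E,m}(R)\ge (R/\Lambda_n(E))^{(n-m+1)/(n-m)}$ for every $m$. Hence $\Psi_E(R)^d\ge (R/\Lambda_n(E))^{dn/(n-1)}$. You therefore lose a positive power of $R/\Lambda_n(E)$, which is unbounded. For $E=K^n$ you would get $R^{d/(n-1)}$ instead of $R^{dn/(n-1)}$. No choice of $T$ or $m$ repairs this: a one-parameter integral family has only $T^d$ members, and its parameter range is capped at about $(R/\Lambda_n)^{1/(n-m)}$.

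There is a second problem. As written, the family lies in $\mathrm{span}_K(x_m,\dots,x_n)$, which is a proper subspace when $m>1$. So at most $n-1$ of its points can be in linear general position. The extension to the lower-index coordinates is therefore essential, not a detail.

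\textbf{The missing idea.} This is what \cite{BHPT} actually does, and what the paper transports in Proposition~\ref{prop:lower}. The number of points should come from a residue field, not from the range of an integral parameter.
\begin{enumerate}
\item[(a)] Choose a prime $\mathfrak p$ with $q=\Nm\mathfrak p\asymp\Psi_E(R)^d$. This needs a prime ideal with norm in an interval $[X,C_9X]$ (Lemma~\ref{lem:prime-interval}).
\item[(b)] Take the $q+1$ points of the rational normal curve in $\mathbb P^{n-1}(\cO_K/\mathfrak p)$. By Vandermonde, any $n$ of them are independent.
\item[(c)] Lift each residue line $\ell$ to a bounded-height vector. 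Use an integral adapted basis and residue sets $S_i$ of classes of elements of house $\lesssim R/\Lambda_i(E)$, with $|S_i|\gg\min\{q,(R/\Lambda_i(E))^d\}$. Then $\prod_i|S_i|>q^{n-1}$ as soon as $q\le c\,\Psi_E(R)^d$. The minimum over $m$ enters here, at the first index where $(R/\Lambda_i(E))^d<q$ (Lemma~\ref{lem:product}). So two elements of $S_1\times\cdots\times S_n$ collide in $(\cO_K/\mathfrak p)^n/\ell$, and their difference is a bounded-height vector reducing into $\ell$.
\item[(d)] General position over $K$ follows because the $n\times n$ determinants of the lifts are nonzero modulo $\mathfrak p$.
\end{enumerate}

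\textbf{Your upper bound is a workable alternative.} It is a primal version of the paper's dual argument. Your coordinates $a_m,\dots,a_n$ relative to a reduced basis play the role of the dual forms $\varphi_1,\dots,\varphi_k$. This avoids Gaudron's transference theorem, but you must supply two things.
\begin{enumerate}
\item[(a)] The box of relation vectors $\delta$ must be fixed independently of the point, as in Lemma~\ref{lem:weighted-relation}. That uniformity, not ``each hyperplane passes through many points,'' is what bounds the number of hyperplanes.
\item[(b)] The coefficient bounds $|a_i|_v\lesssim R/\Lambda_i(E)$ at every archimedean place, with bounded denominators at the finite places, need Minkowski's second theorem for rigid adelic spaces together with a local Hadamard inequality.
\end{enumerate}
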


For the standard rigid adelic space $K^n$ with its standard local norms, one has
\[
\Lambda_1(K^n)=\cdots=\Lambda_n(K^n)=1,
\qquad
\Psi_{K^n}(R)=R^{n/(n-1)}\quad(R\ge1).
\]
Hence Theorem~\ref{thm:main} gives the following special case.

\begin{corollary}\label{cor:standard}
There exist effectively computable constants $c,C>0$ and $R_0\ge2$, depending only on $n,d,r_2,|D_K|$, and $R_K$, such that for every $R\ge R_0$,
\[
cR^{dn/(n-1)}
\le h_{K^n}(R)
\le(n-1)g_{K^n}(R)
\le CR^{dn/(n-1)}.
\]
\end{corollary}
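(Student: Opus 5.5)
We deduce the corollary by specializing Theorem~\ref{thm:main} to $E=K^n$. The only input needed is the computation of the Roy--Thunder minima of $K^n$. After that, the value of $\Psi_{K^n}(R)$ is elementary, and the constants of the corollary are those of the theorem.

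\emph{Step 1: $\Lambda_i(K^n)=1$ for all $i$.} We take the standard local norms: the max norm at finite places, and the sup or Euclidean/Hermitian norm at archimedean places (whichever convention the paper's rigid structure uses; the argument is the same).
\begin{itemize}
\item \emph{Lower bound.} Let $0\ne x=(x_1,\dots,x_n)\in K^n$ and pick an index $j$ with $x_j\ne0$. At every place $\|x\|_v\ge|x_j|_v$, so
\[
H_{K^n}(x)\ge\prod_v|x_j|_v^{n_v/d}=1
\]
by the product formula. Hence $\Lambda_1(K^n)\ge1$.
\item \emph{Upper bound.} Each standard basis vector $e_k$ has $\|e_k\|_v=1$ at every place, so $H_{K^n}(e_k)=1$. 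Since $e_1,\dots,e_n$ are linearly independent, $\Lambda_n(K^n)\le1$.
\end{itemize}
By monotonicity $\Lambda_1\le\cdots\le\Lambda_n$, all the minima equal $1$.

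\emph{Step 2: computing $\Psi_{K^n}$.} With all minima equal to $1$,
\[
\Psi_{K^n,m}(R)=R^{(n-m+1)/(n-m)}=R^{1+1/(n-m)}.
\]
The exponent $1+1/(n-m)$ increases with $m$ for $1\le m<n$. So for $R\ge1$ the minimum over $m$ is attained at $m=1$, which gives
\[
\Psi_{K^n}(R)=R^{n/(n-1)}.
\]

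\emph{Step 3: conclusion.} Apply Theorem~\ref{thm:main} with $E=K^n$ and set $R_0=B_1$, $c=c_1$, $C=C_1$. The hypothesis $R\ge B_1\Lambda_n(K^n)=B_1$ is exactly $R\ge R_0$. Since $R_0\ge2>1$, Step 2 applies, and \eqref{eq:main} becomes the claimed chain of inequalities with $\Psi_{K^n}(R)^d=R^{dn/(n-1)}$. The constants inherit from the theorem their effectivity and their dependence only on $n,d,r_2,|D_K|,R_K$.

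The only point needing any care is the lower bound $H_{K^n}\ge1$ in Step 1. It is immediate from the product formula, whichever standard archimedean norm is used, because every such norm dominates each coordinate's absolute value.
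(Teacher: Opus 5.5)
Your proposal is correct and follows the paper's route: the paper also obtains the corollary by specializing Theorem~\ref{thm:main} to $E=K^n$, using $\Lambda_1(K^n)=\cdots=\Lambda_n(K^n)=1$ and $\Psi_{K^n}(R)=R^{n/(n-1)}$ for $R\ge1$, with $R_0=B_1$, $c=c_1$, $C=C_1$. The only difference is that you prove $\Lambda_i(K^n)=1$ explicitly, via the product formula for the lower bound and the standard basis for the upper bound, whereas the paper simply states this fact.
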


The quantity $\Psi_E(R)$ is a minimum over the successive minima. Thus the optimal scale is not determined only by the number of points in the height ball or by the last minimum: different ranges of the Roy--Thunder minima can control the covering problem.

\subsection{Strategy of the proof}

After rescaling the archimedean norms, it is enough to work with the height ball of radius $1$.  The two bounds have a common combinatorial core, but the pigeonhole principle is used in opposite ways.

For the upper bound, the basic counting principle is the same as in the classical pigeonhole proof of Siegel's lemma.  We choose several linear forms in the dual adelic space whose heights are controlled by the first dual Roy--Thunder minima.  After balancing the local norms of these forms and of a point in the height ball, their values are algebraic integers with different archimedean size bounds.  We then let the coefficients of a linear combination vary in boxes whose side lengths are reciprocal to these bounds.  The coefficient tuples range over a product of $k$ boxes in $\mathcal O_K$, whereas all resulting sums lie in a single archimedean box in $\mathcal O_K$.  Since a $K$-box of radius $T$ contains on the order of $T^d$ algebraic integers, with $d=[K:\mathbb Q]$, the source has $k$ such growth factors while the image has only one.  For $k\ge2$, a collision is therefore forced at the scale determined by the product of the dual minima.  The difference of two coefficient tuples gives a nonzero integral relation.  This is the Siegel-lemma-type step: a counting imbalance produces a small vector in a kernel.

There is one additional requirement specific to the covering problem.  The relation cannot be chosen separately for each point; its coefficients must belong to a finite set fixed in advance, so that the kernels of the corresponding linear forms give one finite cover of the entire height ball.  The weighted-relation lemma provides this uniform form of the pigeonhole argument.  Gaudron's transference theorem then replaces the product of the relevant dual minima by the corresponding product of the minima of $E$, which produces the upper-bound scale $\Psi_E(R)^d$.

The lower bound reverses the same counting idea.  An adapted basis turns the successive minima into coordinate ranges in which a vector is guaranteed to have bounded height.  After reduction modulo a prime ideal $\mathfrak p$, these ranges give a product set in $(\mathcal O_K/\mathfrak p)^n$.  If this product set has more than $|\mathcal O_K/\mathfrak p|^{n-1}$ elements, then for every one-dimensional subspace $\ell$ of the residue space, two admissible vectors have the same image in the quotient by $\ell$; their nonzero difference therefore lies in $\ell$.  Thus every projective direction over the residue field has a bounded-height lift.  We take the directions on a rational normal curve.  Any $n$ distinct points on this curve are linearly independent over the residue field, and hence their lifts are linearly independent over $K$.  Choosing $\Nm\mathfrak p$ on the scale $\Psi_E(R)^d$ gives the required number of lifted points.  This also explains why the same product of successive minima appears in both bounds: it is the threshold for forcing a relation in the upper bound and, in the reverse direction, for lifting sufficiently many residue directions in the lower bound.

The arithmetic steps needed to pass between height bounds and these finite counting problems are stated before they are used below.  To keep the proofs of the two main bounds focused on the geometric argument, we postpone the proofs of these auxiliary lemmas to Subsections~\ref{subsec:balanced-proof}--\ref{subsec:prime-interval-proof}.  They account for the ideal-class and unit-group effects in choosing integral representatives, for counting algebraic integers in archimedean boxes, and for choosing the prime ideal at the required scale.

\section{Proof of the Main Theorem}
\subsection{Normalization and reductions}
For $R>0$, let $E_R$ be obtained from $E$ by multiplying every
archimedean norm by $R^{-1}$ and leaving the finite-place norms unchanged.
Then
\begin{equation}
H_{E_R}(x)=R^{-1}H_E(x),\qquad
\Lambda_i(E_R)=R^{-1}\Lambda_i(E)\quad(1\le i\le n),
\end{equation}
and
\begin{equation}
\mathcal{B}_E(R)=\mathcal{B}_{E_R}(1),\qquad
h_E(R)=h_{E_R}(1),\qquad
g_E(R)=g_{E_R}(1),\qquad
\Psi_{E_R}(1)=\Psi_E(R).
\end{equation}
Replacing $E$ by $E_R$, the condition $R\ge B_1\Lambda_n(E)$ becomes $\Lambda_n(E_R)\le B_1^{-1}$. Thus it is enough to work with the unit height ball $\mathcal B_{E_R}(1)$.

Let $\cO_K$ be the ring of integers of $K$.  For every nonzero ideal
$\mathfrak a\subset\cO_K$, write
$\Nm\mathfrak a=|\cO_K/\mathfrak a|$ for its absolute norm; we use
$\Nm_{K/\mathbb Q}$ for the field norm.

\begin{lemma}\label{lem:representatives}
There exists a constant $C_2\geq 1$, effectively computable from $d, r_2,|D_K|, R_K$ with the following property. For every rigid adelic space $F$ over $K$ and every $0\neq x\in F$ there is $a\in K^\times$ such that
\begin{equation}
    \|ax\|_{F,v}\le
\begin{cases}
1,& v\nmid\infty,\\
C_2H_F(x),& v\mid\infty.
\end{cases}
\end{equation}
\end{lemma}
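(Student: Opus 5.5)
We will construct $a$ in two stages: first fix the finite places using ideal classes, then fix the archimedean places using units.

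\emph{Finite places.} In a rigid adelic space, each finite-place norm $\|\cdot\|_{F,v}$ is, after a $K_v$-linear change of coordinates, the sup norm with values in $|K_v^\times|_v$ (up to the normalization conventions of Gaudron). So for each finite $v$ there is an integer $e_v$, zero for almost all $v$, with $\|x\|_{F,v}=|\pi_v|_v^{e_v}$. This defines a fractional ideal $\mathfrak b=\prod_{\mathfrak p_v}\mathfrak p_v^{e_v}$. We have $\|bx\|_{F,v}\le1$ for all finite $v$ exactly when $b\in\mathfrak b^{-1}$. Moreover $\prod_{v\nmid\infty}\|x\|_{F,v}^{n_v}=\Nm(\mathfrak b)$, with the sign convention chosen so that $\Nm(\mathfrak b^{-1})=\Nm(\mathfrak b)^{-1}$.

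Choose an integral ideal $\mathfrak c$ in the class of $\mathfrak b^{-1}$ with $\Nm\mathfrak c\le M_K$, where $M_K=(2/\pi)^{r_2}|D_K|^{1/2}$ is the Minkowski bound. Then $\mathfrak c\mathfrak b=(b_0)$ is principal, and $b_0\in\mathfrak b^{-1}$ because $\mathfrak c\subset\cO_K$. Hence $\|b_0x\|_{F,v}\le1$ at every finite $v$. The finite part of the height satisfies
\[
\prod_{v\nmid\infty}\|b_0x\|_{F,v}^{n_v}=\Nm(\mathfrak b)\,\Nm(\mathfrak b^{-1}\mathfrak c^{-1})\cdot(\text{correction})=\Nm\mathfrak c^{-1},
\]
so it lies in $[M_K^{-1},1]$. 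By the product formula,
\[
\prod_{v\mid\infty}\|b_0x\|_{F,v}^{n_v}=H_F(x)^d\,\Nm\mathfrak c\le M_K\,H_F(x)^d.
\]

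\emph{Archimedean places.} We now multiply by a unit $u\in\cO_K^\times$, which leaves all finite norms unchanged. Put $t_v=\log\|b_0x\|_{F,v}$ for $v\mid\infty$, and let $\bar t$ be the weighted mean, so that $d\bar t=\sum n_vt_v\le\log(M_KH_F(x)^d)$. The logarithmic embedding of $\cO_K^\times$ is a full lattice in the trace-zero hyperplane $\{\sum_v n_vy_v=0\}$, with covolume $\sqrt{r_1+r_2}\,R_K$ up to a standard factor. We choose $u$ so that $(t_v+\log|u|_v)_v$ lies within a bounded distance $\rho_K$ of the diagonal vector $(\bar t,\dots,\bar t)$.

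The main obstacle is making $\rho_K$ effective in terms of $d,r_2,R_K$ alone. The first attempt is a reduced basis of the unit lattice: its vectors have lengths bounded below by a constant depending only on $d$ (Dobrowolski-type or elementary bounds for units, for instance $\log|u|_v\ge c(d)$ for non-torsion units), and their product is bounded by a constant times $R_K$ via Minkowski's second theorem. This bounds each basis vector length, and hence the covering radius $\rho_K$, in terms of $d$ and $R_K$.

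With this choice, for every $v\mid\infty$,
\[
\|ub_0x\|_{F,v}\le e^{\rho_K+\bar t}\le e^{\rho_K}M_K^{1/d}H_F(x).
\]
Taking $a=ub_0$ and $C_2=\max\{1,e^{\rho_K}M_K^{1/d}\}$ completes the proof. This constant depends only on $d$, $r_2$, $|D_K|$ and $R_K$.
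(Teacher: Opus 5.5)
Your proof has the same two-stage structure as the paper's. The finite-place conditions define a fractional ideal. A Minkowski-bounded integral ideal $\mathfrak c$ in the appropriate class then yields $b_0$ with all finite norms $\le1$ and archimedean product $H_F(x)^d\Nm\mathfrak c$. Finally, a unit balances the archimedean places.

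The only ingredient you obtain differently is the unit step. The paper cites Bugeaud--Gy\H{o}ry for fundamental units with $\log H(\varepsilon_j)\le C_{\mathrm{BG}}R_K$ and rounds coordinates. You instead bound the covering radius of the log-unit lattice directly, from an effective Dobrowolski-type lower bound for $\mu_1$ and Minkowski's second theorem. This is essentially the argument behind the cited lemma, and it is the same argument the paper uses in Lemma~\ref{lem:ideal-covering}. You need no reduced basis: $r=r_1+r_2-1$ independent lattice vectors of length $\le\mu_r$ give $\rho\le\frac r2\mu_r$. Your route is self-contained; the paper's is shorter by citation. One small correction: the lower bound for non-torsion $u$ is $\max_{v\mid\infty}\log|u|_v\ge c(d)$, not $\log|u|_v\ge c(d)$ at each $v$. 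Using $(2/\pi)^{r_2}|D_K|^{1/2}$ instead of the sharper Minkowski bound is harmless.

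The ideal bookkeeping, however, is inverted as written. With $\|x\|_{F,v}=|\pi_v|_v^{e_v}$ and $\mathfrak b=\prod\mathfrak p_v^{e_v}$, the admissible set is $\mathfrak b^{-1}$ and $\prod_{v\nmid\infty}\|x\|_{F,v}^{n_v}=\Nm(\mathfrak b)^{-1}$. Choosing $\mathfrak c$ in the class of $\mathfrak b^{-1}$ with $(b_0)=\mathfrak c\mathfrak b$ gives $b_0\in\mathfrak b$, not $b_0\in\mathfrak b^{-1}$, so the finite-place bounds do not follow. Instead take $\mathfrak c$ in the class of $\mathfrak b$ and $(b_0)=\mathfrak c\mathfrak b^{-1}\subseteq\mathfrak b^{-1}$. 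Then $\prod_{v\nmid\infty}\|b_0x\|_{F,v}^{n_v}=\Nm(\mathfrak c\mathfrak b^{-1})^{-1}\Nm(\mathfrak b)^{-1}=(\Nm\mathfrak c)^{-1}$, with no correction factor. This is exactly the paper's $\mathfrak a_x\mathfrak c=(a_0)$ with $\mathfrak a_x=\mathfrak b^{-1}$.

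Finally, there is a hypothesis you share with the paper that should be made explicit. Both arguments use $\|x\|_{F,v}\in|K_v^\times|_v$ at finite $v$; the paper does so through $|\xi_v|_v\|x\|_{F,v}=1$. This holds when the finite-place automorphisms defining the rigid structure lie in $\mathrm{GL}_n(K_v)$. In Gaudron's definition, however, the norms live on $F\otimes_K\mathbb C_v$ with automorphisms in $\mathrm{GL}_n(\mathbb C_v)$, and then the statement itself fails. Take $F=K$ with $\|x\|_{F,v}=|\pi_v|_v^{1/2}|x|_v$ at the places of a finite set $S$ of finite places, and standard norms elsewhere. The finite-place conditions force $a\in\cO_K\setminus\{0\}$, hence $\max_{v\mid\infty}|a|_v\ge1$. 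But $C_2H_F(1)=C_2\prod_{v\in S}\Nm\mathfrak p_v^{-1/(2d)}<1$ once $S$ is large. So your proof, like the paper's, is correct under this rationality hypothesis.
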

We postpone the proof of Lemma~\ref{lem:representatives} to Subsection~\ref{subsec:balanced-proof}.

For $a\in K$, write
\[
\|a\|_\infty=\max_{\sigma:K\hookrightarrow\mathbb C}|\sigma(a)|,
\]
and for $T\ge0$ put
\[
N_K(T)=\#\{a\in\cO_K:\|a\|_\infty\le T\}.
\]

\begin{lemma}\label{lem:box-count}
There are constants $c_4>0$ and $C_4\ge1$, effectively computable from $d,r_2, |D_K|$, such that for every $T\ge0$:
\begin{enumerate}
\item[(i)]
\[
c_4\,(1+T^d)\le N_K(T)\le C_4\,(1+T^d);
\]
\item[(ii)]
\begin{equation}\label{eq:box-doubling}
N_K(2T)\le C_4N_K(T).
\end{equation}
\end{enumerate}
\end{lemma}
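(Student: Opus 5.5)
We will treat $\cO_K$ as a full lattice in $K_\mathbb R=K\otimes_\mathbb Q\mathbb R\cong\mathbb R^{r_1}\times\mathbb C^{r_2}\cong\mathbb R^d$ via the Minkowski embedding. Its covolume is $2^{-r_2}|D_K|^{1/2}$. In these coordinates the set $\{\|a\|_\infty\le T\}$ is the product body
\[
Q_T=\{(y_1,\dots,y_{r_1},z_1,\dots,z_{r_2}):|y_i|\le T,\ |z_j|\le T\},
\]
whose volume is $2^{r_1}\pi^{r_2}T^d$. The count $N_K(T)$ is therefore $\#(\cO_K\cap Q_T)$, and the plan is to compare it with $\vol(Q_T)/\covol(\cO_K)$ using a fundamental domain of bounded diameter.

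First, choose an effective $\mathbb Z$-basis of $\cO_K$ whose Minkowski images are short. By Minkowski's second theorem applied to $\cO_K$ and the Euclidean ball, the product of the successive minima is at most a constant depending on $d$ times $\covol(\cO_K)$. Also, every nonzero $a\in\cO_K$ satisfies $\|a\|_\infty\ge1$, because $|\Nm_{K/\mathbb Q}(a)|\ge1$ while all other conjugates are at most $\|a\|_\infty$. Hence the first minimum is at least $1$, and so every minimum is at most $\rho:=c(d)|D_K|^{1/2}$. Passing from linearly independent vectors to a basis of the lattice costs a factor depending only on $d$. This gives a fundamental parallelepiped $\mathcal P$ with $\|x\|_\infty\le\rho'$ for all $x\in\mathcal P$, where $\rho'$ is effective in $d$ and $|D_K|$.

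Next we run the standard sandwich argument. The translates $a+\mathcal P$ with $a\in\cO_K\cap Q_T$ are disjoint and lie in $Q_{T+\rho'}$. Conversely, the translates with $a+\mathcal P$ meeting $Q_{T-\rho'}$ cover $Q_{T-\rho'}$, and each such $a$ lies in $Q_T$. This gives
\[
\frac{\vol Q_{T-\rho'}}{\covol}\le N_K(T)\le\frac{\vol Q_{T+\rho'}}{\covol}.
\]
For $T\ge2\rho'$ this yields $N_K(T)\asymp T^d$, with constants depending on $r_1,r_2$ and $|D_K|$; note that $r_1=d-2r_2$. For $T<2\rho'$ we use $1\le N_K(T)$, since $0$ is always counted. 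For the upper bound in this range, $N_K(T)\le N_K(2\rho')\le C(1+\rho'^d)$. Since $1+T^d\ge1$, this is a constant times $1+T^d$. Together these prove (i).

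Part (ii) follows from (i): $N_K(2T)\le C_4(1+2^dT^d)\le 2^dC_4(1+T^d)\le 2^dC_4c_4^{-1}N_K(T)$. Enlarging $C_4$ accordingly gives (ii).

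The main obstacle is the effectivity of the short basis, specifically making the diameter bound $\rho'$ depend only on $d$ and $|D_K|$. The route above handles it with the norm inequality $\|a\|_\infty\ge1$ combined with Minkowski's second theorem and a standard basis-reduction bound. Every other step is routine.
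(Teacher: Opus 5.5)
Your proposal is correct and follows the paper's proof. Both arguments use $\|a\|_\infty\ge1$ (from $|\Nm_{K/\mathbb Q}(a)|\ge1$) together with Minkowski's second theorem to get a fundamental domain of effectively bounded size, run the same volume sandwich between radii $T\pm\rho$, and derive (ii) from (i) in the same way.

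The one technical difference is in how the bounded fundamental domain is built. The paper works with the max-norm box and bounds the covering radius through a sublattice spanned by short independent vectors, then takes a nearest-lattice-point fundamental domain, so it never needs to reduce to a basis. You instead invoke a basis-reduction lemma (for example Cassels' bound $|b_i|\le\max\{1,i/2\}\lambda_i$), which in exchange gives the half-open parallelepiped as a fundamental domain directly.
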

We postpone the proof of Lemma~\ref{lem:box-count} to Subsection~\ref{subsec:box-count-proof}.

\begin{lemma}\label{lem:adapted-basis}
Let $F$ be an $s$-dimensional rigid adelic space over $K$.  There is a
$K$-basis $z_1,\ldots,z_s$ of $F$ such that
\[
 H_F(z_i)\le2\Lambda_i(F)
 \qquad(1\le i\le s).
\]
\end{lemma}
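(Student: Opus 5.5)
Since $\Lambda_i(F)>0$ is an infimum (positive because $H_F$ is bounded below on nonzero vectors by Northcott-type finiteness/Liouville, and in fact $\Lambda_1(F)>0$ since heights of points are bounded away from $0$ in a rigid adelic space), the factor $2$ gives room to replace infima by actual vectors. I will construct $z_1,\dots,z_s$ inductively in the usual greedy way. The key point is that for each $i$, by the definition of $\Lambda_i(F)$ applied with $2\Lambda_i(F)>\Lambda_i(F)$, there exist $i$ linearly independent vectors $x_1,\dots,x_i$ with $\max_j H_F(x_j)<2\Lambda_i(F)$.

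Concretely: suppose $z_1,\dots,z_{i-1}$ are already chosen, linearly independent, with $H_F(z_j)\le2\Lambda_j(F)$ for $j<i$. Let $V=\operatorname{span}_K(z_1,\dots,z_{i-1})$, of dimension $i-1$. Pick $x_1,\dots,x_i$ linearly independent with $H_F(x_j)<2\Lambda_i(F)$, as above. Their span has dimension $i>\dim V$, so some $x_j\notin V$. Set $z_i=x_j$. Then $z_1,\dots,z_i$ are linearly independent and $H_F(z_i)<2\Lambda_i(F)$. For $i=1$ this is just a nonzero vector with $H_F(z_1)<2\Lambda_1(F)$. After $s$ steps we get a basis of $F$.

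The only point that needs care is $\Lambda_i(F)>0$, so that $2\Lambda_i(F)$ is strictly larger than the infimum. I would justify this by choosing a $K$-basis of $F$: as recalled in the introduction, $H_F$ then differs from the standard Weil height by bounded multiplicative factors (the local norms differ from the standard ones only at finitely many places, and at each place two norms on a finite-dimensional space are equivalent). The Weil height on projective space is $\ge1$, so $H_F$ is bounded below by a positive constant, and hence $\Lambda_i(F)\ge\Lambda_1(F)>0$. Alternatively, one can use Northcott finiteness to see that the infimum is attained, which even gives the bound with constant $1$. Neither route is a real obstacle, so the proof is a direct greedy argument.
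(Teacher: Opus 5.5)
Your greedy construction is the same argument the paper gives: for each $i$, take $i$ independent vectors of height below $2\Lambda_i(F)$, and pick one lying outside $\operatorname{span}_K(z_1,\ldots,z_{i-1})$. Your extra check that $\Lambda_i(F)\ge\Lambda_1(F)>0$, by comparison with the Weil height or by Northcott (which even shows the infimum is attained), is correct and makes explicit a point the paper uses tacitly.
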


\begin{proof}
For each $1\le i\le s$, the definition of $\Lambda_i(F)$ gives at least
$i$ linearly independent vectors of height at most $2\Lambda_i(F)$.  Having
chosen $z_1,\ldots,z_{i-1}$, choose $z_i$ among these vectors outside
$\operatorname{span}_K(z_1,\ldots,z_{i-1})$.  Then
$H_F(z_i)\le2\Lambda_i(F)$, and $z_1,\ldots,z_s$ is a basis of $F$.
\end{proof}

\subsection{Upper bound}

\begin{lemma}\label{lem:weighted-relation}
Let $k\ge2$ and $M>0$.  There is a constant $C_5>0$, effectively computable from
$M,k,d,r_2,|D_K|$, such that for every $\nu_1,\ldots,\nu_k>0$ there is a
finite set $\mathcal D\subset\cO_K^k\setminus\{0\}$ with the following properties.
\begin{enumerate}
\item[(i)]
\[
|\mathcal D|
\le
C_5
\left[
1+(\nu_1\cdots\nu_k)^{d/(k-1)}
\right].
\]

\item[(ii)] If $y_1,\ldots,y_k\in\cO_K$ satisfy
\[
\|y_i\|_\infty\le M\nu_i
\qquad(1\le i\le k),
\]
then some $(\delta_1,\ldots,\delta_k)\in\mathcal D$ satisfies
\[
\delta_1y_1+\cdots+\delta_ky_k=0.
\]
\end{enumerate}
\end{lemma}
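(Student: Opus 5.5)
The idea is a uniform Siegel-lemma pigeonhole. We choose boxes of coefficients whose side lengths are reciprocal to the $\nu_i$ and take $\mathcal D$ to be the set of all differences of two tuples from the product box.

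Fix a scale $T\ge1$, to be chosen later, and for each $i$ set $t_i=T/\nu_i$. Let
\[
\mathcal A=\{(\alpha_1,\ldots,\alpha_k)\in\cO_K^k:\|\alpha_i\|_\infty\le t_i\},
\qquad
\mathcal D=(\mathcal A-\mathcal A)\setminus\{0\}.
\]
For $y$ as in (ii), every sum $\sum\alpha_iy_i$ with $\alpha\in\mathcal A$ lies in $\cO_K$ and satisfies
\[
\Bigl\|\sum_i\alpha_iy_i\Bigr\|_\infty\le\sum_i t_iM\nu_i=kMT.
\]
So the map $\alpha\mapsto\sum\alpha_iy_i$ sends $\mathcal A$ into a set of size $N_K(kMT)\le C(1+T^d)$, by Lemma~\ref{lem:box-count}(i). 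Also $|\mathcal A|=\prod_iN_K(t_i)\ge c_4^k\prod_i(1+t_i^d)$. If $|\mathcal A|>N_K(kMT)$, two distinct tuples have the same image, and their difference is an element of $\mathcal D$ killing $y$. The key point is that $\mathcal D$ depends only on the $\nu_i$, not on $y$.

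\textbf{Choosing $T$.} We have $\prod(1+t_i^d)\ge T^{dk}/(\nu_1\cdots\nu_k)^d$, while $N_K(kMT)\le C'T^d$ for $T\ge1$. So it suffices to take
\[
T^{d(k-1)}=A\,(\nu_1\cdots\nu_k)^d
\]
with $A$ a large constant depending on $M,k,d,r_2,|D_K|$, that is, $T=A'(\nu_1\cdots\nu_k)^{1/(k-1)}$. The trouble is that this $T$ may be less than $1$. In that case we take $T=\max\{1,A'(\nu_1\cdots\nu_k)^{1/(k-1)}\}$ instead, and use the cruder bound $\prod(1+t_i^d)\ge\max(1,T^{dk}/\prod\nu_i^d)$.

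This is where I expect the main obstacle: when $\prod\nu_i$ is small, the counting inequality may fail as stated. I would first try to handle it by observing that if some $\nu_i$ is tiny, then $\|y_i\|_\infty<1$ forces $y_i=0$, since a nonzero algebraic integer has some conjugate of absolute value at least $1$. In that case the unit vector $e_i$ already works. So I would put the vectors $e_1,\ldots,e_k$ into $\mathcal D$ and reduce to the case $M\nu_i\ge1$ for all $i$.

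In that reduced case, $t_i$ could still be below $1$ while $\prod\nu_i$ is small. Then I would take $t_i=\max(T/\nu_i,\,1)$, which has the effect of replacing $\nu_i$ by $\min(\nu_i,T)$; the image bound $\sum t_iM\nu_i$ then needs care. As a fallback, I would bound everything in terms of $\nu'_i=\max(\nu_i,1/M)$ and absorb the change into $C_5$.

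\textbf{Bounding $|\mathcal D|$.} We have $|\mathcal D|\le|\mathcal A|^2$, which is too big. A better count uses $\mathcal A-\mathcal A\subset\{\|\delta_i\|_\infty\le2t_i\}$, so
\[
|\mathcal D|\le\prod_iN_K(2t_i)\le C_4^k\prod_iN_K(t_i),
\]
by the doubling bound \eqref{eq:box-doubling}. Moreover $\prod_iN_K(t_i)\lesssim\prod_i(1+t_i^d)$. When all $t_i\ge1$, this is $\asymp T^{dk}/\prod\nu_i^d$, which for our choice of $T$ is $\asymp T^d\asymp 1+(\nu_1\cdots\nu_k)^{d/(k-1)}$, as required. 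When some $t_i<1$, the factor $1+t_i^d\le2$ contributes only a constant, but then the product no longer matches, and this mismatch is the same obstacle as above.

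To fix it, I would use $T$ only as a threshold and choose the boxes so that $|\mathcal A|$ is just above $N_K(kMT)$. Concretely, I would shrink the boxes until $|\mathcal A|\approx 2N_K(kMT)$, using the doubling bound \eqref{eq:box-doubling} to control the jumps. This keeps $|\mathcal D|\le C_4^k|\mathcal A|\ll T^d$, giving (i).
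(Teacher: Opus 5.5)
Your last paragraph gives a correct proof. It differs from the paper's only in how the scale is selected.

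The paper ties the threshold to the scale. With $\mathcal N(T)=\prod_i N_K(T/\nu_i)$ and $V=\nu_1\cdots\nu_k$, it takes $T$ minimal in a dyadic sequence with $\mathcal N(T)>A(1+T^d)$. Pigeonhole into $\{\|z\|_\infty\le kMT\}$ then works at $T$. Minimality gives $|\mathcal D|\le\mathcal N(2T)\le C_4^{2k}\mathcal N(T/2)\le C_4^{2k}A(1+(T/2)^d)$. Only afterwards is $(T/2)^d\ll 1+V^{d/(k-1)}$ read off, from $c_4^k(T/2)^{dk}V^{-d}\le\mathcal N(T/2)$.

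You instead fix $T=\max\{1,A'V^{1/(k-1)}\}$ in advance, so the target bound $N_K(kMT)\ll 1+V^{d/(k-1)}$ is immediate, and then shrink the coefficient boxes independently of $T$. To make the shrinking rigorous, halve one radius at a time (all of $t_1$ first, then $t_2$, and so on) until every radius is below $1$, at which point $|\mathcal A|=1$.
\begin{itemize}
\item Each step keeps $t_i'\le T/\nu_i$, so every sum still lies in the radius-$kMT$ box and (ii) survives.
\item By Lemma~\ref{lem:box-count}(ii), each step divides $|\mathcal A|$ by at most $C_4$.
\item Hence the last stage with $|\mathcal A|>N_K(kMT)$ has $|\mathcal A|\le C_4N_K(kMT)$, so $|\mathcal D|\le C_4^{k}|\mathcal A|\le C_4^{k+1}N_K(kMT)$.
\end{itemize}
The paper's single dyadic search is a little more economical. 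Your version isolates the real issue: for unbalanced $\nu_i$ the naive count $\prod_i N_K(2T/\nu_i)$ overshoots $T^d$. For example, with $k=3$, $\nu_1=N$, $\nu_2=\nu_3=1/M$, it is of order $N^d$ against the required $N^{d/2}$.

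Your other worries can be dropped. Since $T\ge1$ and $T^{d(k-1)}\ge (A')^{d(k-1)}V^d$, one has $|\mathcal A|\ge c_4^kT^{dk}V^{-d}\ge c_4^k(A')^{d(k-1)}T^d>N_K(kMT)$ for $A'$ large, even when $V$ is small. So the counting step never fails, and the reduction via $e_1,\ldots,e_k$ is unnecessary. Also, the choice $t_i=\max\{T/\nu_i,1\}$ would actually destroy the image bound when $\nu_i>T$.
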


We postpone the proof of Lemma~\ref{lem:weighted-relation} to Subsection~\ref{subsec:weighted-proof}.

\begin{proposition}\label{prop:upper}
There is a constant $C_6>0$, effectively computable from
$n,d,r_2,|D_K|$, and $R_K$, such that every $n$-dimensional rigid adelic space $F$ over $K$ satisfies:
\begin{enumerate}
\item[(i)]
\[
g_F(1)
\le C_6\left[1+\Psi_F(1)^d\right].
\]
\item[(ii)] If $\Lambda_n(F)\le1$, then
\[
g_F(1)
\le C_6\Psi_F(1)^d.
\]
\end{enumerate}
\end{proposition}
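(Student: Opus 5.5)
Fix $1\le m<n$ attaining (or nearly attaining) the minimum in $\Psi_F(1)$, and set $k=n-m+1\ge2$. The plan is to build a finite family of proper subspaces as kernels of linear forms $\sum_i\delta_i\varphi_i$, where $\varphi_1,\dots,\varphi_k$ are small dual vectors and $(\delta_i)$ ranges over the set $\mathcal D$ of Lemma~\ref{lem:weighted-relation}.

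\textbf{Step 1: choose the dual forms.} Apply Lemma~\ref{lem:adapted-basis} to the dual rigid adelic space $F^\vee$ to get linearly independent forms $\varphi_1,\dots,\varphi_k$ with $H_{F^\vee}(\varphi_i)\le2\Lambda_i(F^\vee)$. Use Lemma~\ref{lem:representatives} to rescale each $\varphi_i$ so that its finite-place norms are at most $1$ and its archimedean norms are at most $C_2H_{F^\vee}(\varphi_i)$. Likewise, for each $P\in\mathcal B_F(1)$ pick a representative $x$ with finite norms at most $1$ and archimedean norms at most $C_2$. Then $y_i=\varphi_i(x)$ satisfies $|y_i|_v\le1$ at every finite place, so $y_i\in\cO_K$. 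At each archimedean place, $|y_i|_v\le C_2^2\,2\Lambda_i(F^\vee)$. This gives $\|y_i\|_\infty\le M\nu_i$ with $M=2C_2^2$ and $\nu_i=\Lambda_i(F^\vee)$.

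\textbf{Step 2: build the cover.} Lemma~\ref{lem:weighted-relation} gives, for each such $P$, some $\delta\in\mathcal D$ with $\sum\delta_iy_i=0$. Hence $x$ lies in $W_\delta=\ker(\sum\delta_i\varphi_i)$. This kernel is a proper subspace, since the $\varphi_i$ are independent and $\delta\ne0$. Therefore
\[g_F(1)\le|\mathcal D|\le C_5\bigl[1+(\Lambda_1(F^\vee)\cdots\Lambda_k(F^\vee))^{d/(k-1)}\bigr].\]

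\textbf{Step 3: transfer to $F$ (the main obstacle).} Gaudron's transference (Roy--Thunder duality) gives $\Lambda_i(F^\vee)\Lambda_{n+1-i}(F)\asymp1$ with constants depending on $n,d,|D_K|$. So
\[\Lambda_1(F^\vee)\cdots\Lambda_k(F^\vee)\ll(\Lambda_m(F)\cdots\Lambda_n(F))^{-1},\]
because $\{n+1-i:1\le i\le k\}=\{m,\dots,n\}$. Raising to the power $d/(k-1)=d/(n-m)$ gives exactly $\Psi_{F,m}(1)^d$. Minimizing over $m$ proves (i). The care needed here is that the transference constants and $C_5$, which depends on $M$ and $k\le n$, stay uniform in $F$. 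This holds because $M$ depends only on $C_2$, and $k$ takes finitely many values.

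\textbf{Step 4: part (ii).} If $\Lambda_n(F)\le1$, then every $\Lambda_i(F)\le1$, so
\[\Psi_{F,m}(1)=(\Lambda_m\cdots\Lambda_n)^{-1/(n-m)}\ge1\quad\text{for every }m,\]
hence $\Psi_F(1)\ge1$. Then $1+\Psi_F(1)^d\le2\Psi_F(1)^d$, and (ii) follows from (i) with $C_6$ doubled.

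If Gaudron's transference is only available as a product inequality rather than termwise, I would instead use the two-sided bound for $\prod_i\Lambda_i(F^\vee)$ together with $\Lambda_1(F)\cdots\Lambda_n(F)\asymp H(F)$ and $H(F^\vee)=H(F)^{-1}$. I would then combine these with the monotonicity of the minima to obtain the same inequality, possibly after adjusting $m$.
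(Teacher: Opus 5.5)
Your proposal is correct and follows the paper's proof essentially step for step: an adapted basis of the dual space $F^\vee$ rescaled via Lemma~\ref{lem:representatives}, the weighted-relation lemma giving one finite family of kernels that covers $\mathcal B_F(1)$, Gaudron's termwise transference $\Lambda_i(F^\vee)\le n|D_K|^{1/d}/\Lambda_{n-i+1}(F)$ to reach $\Psi_{F,m}(1)^d$, and $\Psi_F(1)\ge1$ for part (ii). The differences are cosmetic: you put the factor $2C_2$ into $M=2C_2^2$, while the paper puts it into $\nu_i=2C_2\Lambda_i(F^\vee)$. Your product-inequality fallback is unnecessary, since the paper uses exactly the termwise bound from Gaudron's Theorem~36.
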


\begin{proof}
Fix $1\le m<n$ and set $k=n-m+1$.  Endow
$F^\vee=\operatorname{Hom}_K(F,K)$ with the local operator norms
\[
\|\varphi\|_{F^\vee,v}
=
\sup_{0\ne x\in F\otimes_K K_v}
\frac{|\varphi(x)|_v}{\|x\|_{F,v}}.
\]
This dual adelic structure is rigid.
Lemma~\ref{lem:adapted-basis} gives a $K$-basis
$\varphi_1,\ldots,\varphi_n$ of $F^\vee$ such that
\[
H_{F^\vee}(\varphi_i)
\le
2\Lambda_i(F^\vee)
\qquad(1\le i\le n).
\]
For $1\le i\le k$, rescale $\varphi_i$ using
Lemma~\ref{lem:representatives}.  With
\[
\nu_i=2C_2\Lambda_i(F^\vee),
\]
we may assume
\[
\|\varphi_i\|_{F^\vee,v}\le
\begin{cases}
1,&v\nmid\infty,\\
\nu_i,&v\mid\infty.
\end{cases}
\]
Lemma~\ref{lem:weighted-relation}, applied with $M=C_2$ and
$\nu_1,\ldots,\nu_k$, gives a finite set
$\mathcal D\subset\cO_K^k\setminus\{0\}$ such that
\[
|\mathcal D|
\le
C_5
\left[
1+(\nu_1\cdots\nu_k)^{d/(k-1)}
\right].
\]

Let $P\in\mathcal{B}_F(1)$ and choose a representative $0\ne x\in F$.
Since $H_F(x)=H_F(P)\le1$, Lemma~\ref{lem:representatives} allows us
to rescale $x$, without changing $P$, so that
\[
\|x\|_{F,v}\le
\begin{cases}
1,&v\nmid\infty,\\
C_2,&v\mid\infty.
\end{cases}
\]
Set $y_i=\varphi_i(x)$ for $1\le i\le k$.  By the definition of the
operator norm,
\[
|y_i|_v
\le
\|\varphi_i\|_{F^\vee,v}\|x\|_{F,v}
\le
\begin{cases}
1,&v\nmid\infty,\\
C_2\nu_i,&v\mid\infty.
\end{cases}
\]
Thus $y_i$ is integral at every finite place and hence belongs to
$\cO_K$, while the archimedean bounds give
$\|y_i\|_\infty\le C_2\nu_i$.  Hence
Lemma~\ref{lem:weighted-relation} gives
$\boldsymbol\delta=(\delta_1,\ldots,\delta_k)\in\mathcal D$ such that
\[
\delta_1\varphi_1(x)+\cdots+\delta_k\varphi_k(x)=0.
\]
Define
\[
\varphi_{\boldsymbol\delta}=\sum_{i=1}^k \delta_i\varphi_i\in F^\vee.
\]
Since $\boldsymbol\delta\ne0$ and the $\varphi_i$ are $K$-linearly independent,
$\varphi_{\boldsymbol\delta}\ne0$ and $\varphi_{\boldsymbol\delta}(x)=0$.  Since
$\dim_K F=n\ge2$, the kernel of $\varphi_{\boldsymbol\delta}$ is a nonzero
proper $K$-subspace of $F$, and
\[
P=[x]\in\mathbb P(\ker\varphi_{\boldsymbol\delta})(K).
\]
Therefore the projective hyperplanes
\[
\left\{
\mathbb P(\ker\varphi_{\boldsymbol\delta})(K):\boldsymbol\delta\in\mathcal D
\right\}
\]
cover $\mathcal{B}_F(1)$, and
\[
g_F(1)
\le
C_5
\left[
1+(\nu_1\cdots\nu_k)^{d/(k-1)}
\right].
\]

Gaudron's transference theorem \cite{Gaudron}*{p.~66, Theorem~36} gives
\[
\Lambda_i(F^\vee)
\le
\frac{n|D_K|^{1/d}}
{\Lambda_{n-i+1}(F)}
\qquad(1\le i\le k).
\]
Multiplying these inequalities and using $k=n-m+1$ gives
\[
\prod_{i=1}^k\Lambda_i(F^\vee)
\le
\frac{(n|D_K|^{1/d})^k}
{\Lambda_m(F)\cdots\Lambda_n(F)}.
\]
Hence
\[
(\nu_1\cdots\nu_k)^{d/(k-1)}
\le
\bigl(2C_2n|D_K|^{1/d}\bigr)^{kd/(k-1)}
\Psi_{F,m}(1)^d.
\]
The factor
$\bigl(2C_2n|D_K|^{1/d}\bigr)^{kd/(k-1)}$ is bounded uniformly for
$m\in\{1,\ldots,n-1\}$.  Hence we may choose $C_6$, depending only on
$n,d,r_2,|D_K|$, and $R_K$, such that
\[
g_F(1)
\le
\frac{C_6}{2}\left[1+\Psi_{F,m}(1)^d\right]
\qquad(1\le m<n).
\]
Taking the minimum over $m$ gives
\[
g_F(1)
\le
\frac{C_6}{2}\left[1+\Psi_F(1)^d\right].
\]
This proves part~(i).  If $\Lambda_n(F)\le1$, then $\Lambda_i(F)\le1$ for every $i$, and
\[
\Psi_{F,m}(1)^d
=
\bigl(\Lambda_m(F)\cdots\Lambda_n(F)\bigr)^{-d/(n-m)}
\ge1
\qquad(1\le m<n).
\]
Thus $\Psi_F(1)^d\ge1$, and
\[
g_F(1)
\le C_6\Psi_F(1)^d.
\]
\end{proof}
\subsection{Lower bound}

\begin{lemma}
\label{lem:residue}
There is a constant $c_7>0$, effectively computable from $d, r_2, |D_K|$, with the following property. For every nonzero prime ideal $\mathfrak p\subset\cO_K$, with
$q=\Nm\mathfrak p$, and every $R\ge1$, there is a subset
$S(R,\mathfrak p)\subset\cO_K/\mathfrak p$ such that:
\begin{enumerate}
\item[(i)]
\[
|S(R,\mathfrak p)|\ge c_7\min\{q,R^d\};
\]
\item[(ii)] every class in $S(R,\mathfrak p)$ has a representative
$a\in\cO_K$ with $\|a\|_\infty\le R/2$.
\end{enumerate}
\end{lemma}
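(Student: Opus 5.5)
Take $S(R,\mathfrak p)$ to be the image in $\cO_K/\mathfrak p$ of the box $\{a\in\cO_K:\|a\|_\infty\le R/2\}$. Then (ii) holds by construction, and all the work is in the counting bound (i). We use Lemma~\ref{lem:box-count} together with a pigeonhole argument on fibres of the reduction map. The key observation is this: if $a,b$ lie in the box and have the same residue, then $a-b\in\mathfrak p$ and $\|a-b\|_\infty\le R$. So the fibres of the reduction map restricted to the box are controlled by the number of elements of $\mathfrak p$ in a box of radius $R$.

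First we estimate elements of $\mathfrak p$ of small size. If $0\ne c\in\mathfrak p$, then $\Nm\mathfrak p$ divides $|\Nm_{K/\mathbb Q}(c)|$, so
\[
q\le|\Nm_{K/\mathbb Q}(c)|\le\|c\|_\infty^d.
\]
Hence every nonzero element of $\mathfrak p$ has $\|c\|_\infty\ge q^{1/d}$. The proof then splits into two cases.

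In the first case, $R^d\le q$, say $R< q^{1/d}$ (the boundary case is absorbed by shrinking the box slightly, or by using $R/4$ together with the doubling property (ii) of Lemma~\ref{lem:box-count}). Then two distinct elements of the box cannot be congruent modulo $\mathfrak p$, since their difference would be a nonzero element of $\mathfrak p$ of size at most $R<q^{1/d}$. So the reduction map is injective on the box. Lemma~\ref{lem:box-count}(i) then gives
\[
|S|\ge N_K(R/2)\ge c_4(1+(R/2)^d)\gg R^d.
\]
To keep a strict inequality, we may work with the box of radius $R/4$: any difference then has size at most $R/2<q^{1/d}$ whenever $R^d< 2^d q$, which covers the case.

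In the second case, $R^d> q$. We want $\gg q$ residues. Set $T=q^{1/d}/4\le R/4$, so the box of radius $T$ sits inside the box of radius $R/2$. By the first case, the reduction map is injective on the box of radius $T$. Its image therefore has at least
\[
N_K(T)\ge c_4(1+T^d)\ge c_4 4^{-d}q
\]
elements. Combining the two cases gives (i) with $c_7$ depending only on $c_4$ and $d$, and hence only on $d,r_2,|D_K|$.

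We do not expect a real obstacle here. The only delicate points are bookkeeping: handling the boundary case $R\approx q^{1/d}$ (strict versus non-strict inequality), and the hypothesis $R\ge1$, which guarantees $1+(R/4)^d\gg R^d$ so that the lower bound is a genuine multiple of $\min\{q,R^d\}$.
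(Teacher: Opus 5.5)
Your proof is correct and is essentially the paper's argument. Nonzero elements of $\mathfrak p$ satisfy $\|\alpha\|_\infty\ge q^{1/d}$, so reduction modulo $\mathfrak p$ is injective on a box whose radius is a fixed fraction of $\min\{R,q^{1/d}\}$, and Lemma~\ref{lem:box-count} then gives the count with the same constant $c_7=4^{-d}c_4$. The only differences are cosmetic: you take $S$ to be the image of the full $R/2$-box and bound it below by an injectively embedded sub-box, whereas the paper takes the smaller box's image directly; also, only injectivity is used, so the ``pigeonhole on fibres'' framing is not needed.
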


We postpone the proof of Lemma~\ref{lem:residue} to Subsection~\ref{subsec:residue-product-proofs}.

\begin{lemma}\label{lem:product}
Let $C_2$ be the constant in Lemma~\ref{lem:representatives}.  There are
constants $0<c_8<1$ and $C_8\ge2$, effectively computable from $n, d, r_2, |D_K|, R_K$, with the following property.  Let $F$ be
an $n$-dimensional rigid adelic space over $K$, and write
\[
\lambda_i=\Lambda_i(F),
\qquad
R_i=\frac{1}{8nC_2\lambda_i}
\qquad(1\le i\le n).
\]
If $\mathfrak p\subset\cO_K$ is a nonzero prime ideal, $q=\Nm\mathfrak p$, and
\[
\lambda_n\le c_8,
\qquad
C_8\le q\le c_8\Psi_F(1)^d,
\]
then:
\begin{enumerate}
\item[(i)] $R_i\ge1$ for every $1\le i\le n$;
\item[(ii)] if, for each $1\le i\le n$, the set $S_i=S(R_i,\mathfrak p)$ is chosen as in Lemma~\ref{lem:residue}, then
\[
\prod_{i=1}^n|S_i|>q^{n-1}.
\]
\end{enumerate}
\end{lemma}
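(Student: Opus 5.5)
Part (i) is immediate once $c_8$ is small: since $\lambda_i\le\lambda_n\le c_8$, we get $R_i\ge 1/(8nC_2c_8)\ge1$ as soon as $c_8\le 1/(8nC_2)$. So we impose this in the choice of $c_8$.

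For part (ii), Lemma~\ref{lem:residue}(i) gives $|S_i|\ge c_7\min\{q,R_i^d\}$. Let $I=\{i: R_i^d<q\}$. Since $\lambda_1\le\cdots\le\lambda_n$, we have $R_1\ge\cdots\ge R_n$, so $I=\{m,m+1,\ldots,n\}$ for some $m$ (or $I$ is empty). Then
\[
\prod_{i=1}^n|S_i|\ge c_7^n\,q^{\,m-1}\prod_{i=m}^n R_i^d .
\]
We want this to exceed $q^{n-1}$, i.e.\ $c_7^n\prod_{i=m}^nR_i^d>q^{n-m}$. Three cases:
\begin{itemize}
\item[(a)] $I$ empty: the bound reads $c_7^nq^n>q^{n-1}$, which holds if $q>c_7^{-n}$. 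We arrange this by taking $C_8>c_7^{-n}$.
\item[(b)] $I=\{n\}$, i.e.\ $m=n$: we need $c_7^nR_n^d>1$, and it suffices that $R_n^d\ge$ something. This is not automatic, so I will absorb this case into the next one by treating $m=n$ the same way as $q\le c_8\Psi_{F,n-1}(1)^d$. More simply, I would rather avoid a separate count: whenever $m=n$, I weaken $\min\{q,R_n^d\}\ge R_n^d$ using $q>R_n^d$ and compare with $m'=n-1$.
\item[(c)] $1\le m<n$: using $R_i=(8nC_2\lambda_i)^{-1}$, the condition becomes
\[
q^{n-m}<c_7^n(8nC_2)^{-d(n-m+1)}(\lambda_m\cdots\lambda_n)^{-d}.
\]
\end{itemize}

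To handle (c), use the hypothesis $q\le c_8\Psi_F(1)^d\le c_8\Psi_{F,m}(1)^d=c_8(\lambda_m\cdots\lambda_n)^{-d/(n-m)}$. Raising to the power $n-m$ gives $q^{n-m}\le c_8^{n-m}(\lambda_m\cdots\lambda_n)^{-d}$. It therefore suffices that $c_8^{n-m}<c_7^n(8nC_2)^{-dn}$ for all $1\le m<n$. Since $c_8<1$, it is enough to take $c_8<c_7^n(8nC_2)^{-dn}$.

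For $m=n$ (only $R_n^d<q$), I use $R_{n-1}^d\ge R_n^d$ together with the inequality $\min\{q,R_{n-1}^d\}\ge\min\{q,R_{n-1}^d,\ldots\}$. Concretely, since $\min\{q,R_{n-1}^d\}\ge q\cdot\min\{1,R_{n-1}^d/q\}$, I can redo the estimate with the index set $I'=\{n-1,n\}$: the lower bound $|S_{n-1}|\ge c_7\min\{q,R_{n-1}^d\}\ge c_7\min\{q,R_{n-1}^d\}$ is used as $\ge c_7 R_{n-1}^d$ if $R_{n-1}^d<q$, and otherwise as $c_7q\ge c_7 R_{n-1}^d\cdot (q/R_{n-1}^d)$. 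The cleanest route is to note that for any $m\le \min I$ one has $\prod_i|S_i|\ge c_7^n q^{m-1}\prod_{i\ge m}\min\{q,R_i^d\}$, and replacing $\min\{q,R_i^d\}$ by $R_i^d$ only decreases the product for $i\ge m$ when $i\in I$, while for $i\notin I$ it requires care. So I choose $m=\min(I)$ when $\min I<n$, and $m=n-1$ when $I=\{n\}$. In the latter case $\min\{q,R_{n-1}^d\}=q$, and the product is $c_7^nq^{n-1}R_n^d$. I need $R_n^d>c_7^{-n}$, which holds because $R_n\ge1/(8nC_2c_8)$ is large when $c_8$ is small.

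The main obstacle is this bookkeeping: ensuring that each regime is covered by one smallness condition on $c_8$ and one largeness condition on $C_8$. All the conditions imposed are explicit in $n,d,r_2,|D_K|,R_K$ through $c_7$ and $C_2$, so the constants are effective.
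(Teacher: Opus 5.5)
Your proposal is correct and follows the paper's proof: the same split by $m=\min\{i:R_i^d<q\}$, with $I=\emptyset$ handled by $C_8>c_7^{-n}$, the case $m=n$ by $c_7^nR_n^d\ge c_7^n(8nC_2c_8)^{-d}>1$, and the case $m<n$ by raising $q\le c_8\Psi_{F,m}(1)^d$ to the power $n-m$. For $m=n$ you should state the needed condition $8nC_2c_8<c_7^{n/d}$ explicitly, as the paper does; it already follows from your uniform choice $c_8<c_7^n(8nC_2)^{-dn}$, since $c_7\le1$ and $8nC_2>1$. You should also take $C_8\ge2$, as the statement requires.
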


We postpone the proof of Lemma~\ref{lem:product} to Subsection~\ref{subsec:residue-product-proofs}.

\begin{lemma}\label{lem:prime-interval}
There is a constant $C_9>1$, effectively computable from $d$ and $|D_K|$, such that for every $X\ge2$ there is a nonzero prime ideal
$\mathfrak p\subset\cO_K$ with
\[
X\le\Nm\mathfrak p\le C_9X.
\]
\end{lemma}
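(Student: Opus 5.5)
The statement to prove is Lemma~\ref{lem:prime-interval}: for every $X\ge2$ there is a prime ideal $\mathfrak p$ with $X\le\Nm\mathfrak p\le C_9X$. The plan is to reduce to rational primes that split, or at least have a prime factor of small residue degree, and then use an effective Bertrand-type statement for the prime ideal counting function of $K$.

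The simplest route avoids Chebotarev entirely and uses an explicit prime ideal theorem. Let $\pi_K(x)$ denote the number of prime ideals of norm at most $x$. Effective versions of Landau's prime ideal theorem give
\[
\pi_K(x)=\mathrm{Li}(x)+O(\text{error})
\]
for $x$ beyond an effective threshold $x_0(d,|D_K|)$. Unconditional results such as Lagarias--Odlyzko or its explicit refinements allow for a possible Siegel zero. In that case the error term is not small enough to compare $x$ with $C_9x$ uniformly. I would therefore instead use an effective \emph{lower bound}: there are effective constants $A,B$ depending on $d,|D_K|$ such that $\pi_K(x)\ge B x/\log x$ for $x\ge A$. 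Such a bound follows, for instance, from the effective lower bounds for the least prime in Chebotarev classes together with counts of degree-one primes, or from the Lagarias--Montgomery--Odlyzko analysis. Combining this with the trivial upper bound $\pi_K(x)\le d\,\pi(x)\le 2d\,x/\log x$ (each rational prime has at most $d$ prime ideals above it), I get for $X\ge A$:
\[
\pi_K(C_9X)-\pi_K(X)\ \ge\ B\frac{C_9X}{\log(C_9X)}-2d\frac{X}{\log X}>0,
\]
provided $C_9$ is chosen large in terms of $B$ and $d$. Since $\log(C_9X)\le 2\log X$ once $X\ge C_9$, it suffices to take $C_9\ge 4d/B$ and to handle $X$ below $\max(A,C_9)$ separately.

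For the small range $2\le X\le X_1:=\max(A,C_9)$, I would fix one prime ideal $\mathfrak p_0$ with $\Nm\mathfrak p_0\ge X_1$. Its existence is guaranteed by the large range, and its norm can be bounded by $C'X_1$. Enlarging $C_9$ to $C'X_1$, the bound $X\le \Nm\mathfrak p_0\le C_9\cdot 2\le C_9X$ holds. All constants remain effective in $d$ and $|D_K|$.

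The main obstacle is justifying the effective lower bound $\pi_K(x)\gg_{d,D_K} x/\log x$ uniformly and effectively, because Siegel zeros threaten effectivity. I would first try an elementary alternative that bypasses zeros. Let $L$ be the Galois closure of $K$, of degree at most $d!$ with discriminant bounded by $|D_K|^{d!}$. Any rational prime $p\nmid D_K$ that splits completely in $L$ yields a prime ideal of $K$ of norm $p$. The count of such $p$ up to $x$ is $\sim x/([L:\mathbb Q]\log x)$ by Chebotarev applied to the trivial class, and the trivial class is effective: the possible exceptional zero only makes the main term larger in the lower bound for the identity class, as in Lagarias--Montgomery--Odlyzko and Thorner--Zaman. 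If that is too delicate, a weaker but sufficient fallback uses Chebyshev-type bounds for the Dedekind zeta function: $\sum_{\Nm\mathfrak a\le x}\Lambda_K(\mathfrak a)\asymp x$, via the residue of $\zeta_K$ at $1$ bounded effectively through the class number formula and Brauer--Siegel-free upper and lower bounds on the residue, in terms of $R_K$ and $|D_K|$. Restricting to prime ideals of degree one and dyadic intervals then yields the required ratio-$C_9$ window.
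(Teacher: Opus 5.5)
Your reduction is sound, and it takes a different route from the paper. You compare an effective lower bound $\pi_K(x)\ge Bx/\log x$ (for $x\ge A$) with the trivial bound $\pi_K(x)\le d\,\pi(x)\le 2dx/\log x$. You then cover $2\le X\le\max\{A,C_9\}$ with a single prime ideal. That patch is legitimate, because enlarging $C_9$ preserves the large-range conclusion; take $C_9>4d/B$ strictly so the final inequality is strict.

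The paper instead quotes the explicit Mertens theorem of Garcia--Lee, which holds for \emph{all} $x\ge2$:
\[
\Bigl|\sum_{\Nm\mathfrak q\le x}\frac{\log\Nm\mathfrak q}{\Nm\mathfrak q}-\log x\Bigr|\le U(d,|D_K|).
\]
Subtracting the estimates at $C_9X$ and $X$ leaves weighted mass at least $\log C_9-2U=1$ on primes with $X<\Nm\mathfrak q\le C_9X$, where $C_9=e^{2U+1}$; Bertrand handles $K=\mathbb Q$. One estimate supplies both sides of the comparison, and no small-$X$ patch is needed. The input is also more elementary than yours, since it needs no information about zeros of $\zeta_K$.

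The weak point is the input you flag yourself, and part of your justification for it is wrong. An exceptional zero does \emph{not} enlarge the main term for the identity class. For $C=\{1\}$ it enters as $-\frac1{|G|}\mathrm{Li}(x^{\beta_0})$, so it depletes split primes; that is the classical Siegel-zero phenomenon.

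Siegel zeros are nevertheless not an obstacle to effectivity here, because $C_9$ may depend on $|D_K|$ in any effective way. Stark's effective bound keeps any exceptional zero of $\zeta_K$ away from $1$ by an effective negative power of $|D_K|$, with constants depending on $d$. With that, the Lagarias--Odlyzko prime ideal theorem gives $\pi_K(x)\ge\frac12\mathrm{Li}(x)$ beyond an effective threshold, which is all your argument needs.

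Cleaner still is your fallback, carried to Mertens form. Combine the effective ideal count $\#\{\mathfrak a:\Nm\mathfrak a\le x\}=\kappa_Kx+O(x^{1-1/d})$, the identity $\log\Nm\mathfrak a=\sum_{\mathfrak b\mid\mathfrak a}\Lambda_K(\mathfrak b)$, and the trivial Chebyshev upper bound. This gives $\sum_{\Nm\mathfrak q\le x}\log\Nm\mathfrak q/\Nm\mathfrak q=\log x+O(1)$ with no zero-free region at all. Here $\kappa_K$ is bounded above and below effectively in terms of $d$ and $|D_K|$ alone. So $R_K$, which this lemma's constant is not allowed to depend on, need not enter. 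This is exactly the estimate the paper imports.
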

We postpone the proof of Lemma~\ref{lem:prime-interval} to Subsection~\ref{subsec:prime-interval-proof}.

\begin{proposition}\label{prop:lower}
There are constants $c_1>0$ and $B_1\ge2$, effectively computable from $n, d, r_2, |D_K|, R_K$ with the following property. For every $n$-dimensional rigid adelic space $E$ over $K$ and every $R\ge B_1\Lambda_n(E)$,
\[
 h_E(R)\ge c_1\Psi_E(R)^d.
\]
\end{proposition}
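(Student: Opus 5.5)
First we reduce to radius $1$: replacing $E$ by $F=E_R$, we have $h_E(R)=h_F(1)$ and $\Psi_E(R)=\Psi_F(1)$. The hypothesis $R\ge B_1\Lambda_n(E)$ becomes $\lambda_n:=\Lambda_n(F)\le B_1^{-1}$. We choose $B_1\ge\max\{2,c_8^{-1}\}$, so that $\lambda_n\le c_8$ as Lemma~\ref{lem:product} requires. Since all $\lambda_i\le c_8<1$, we get $\Psi_F(1)\ge1$, and in fact $\Psi_F(1)\ge c_8^{-n/(n-1)}$. So $\Psi_F(1)^d$ is bounded below by a positive constant.

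We split into two cases according to the size of $X:=c_8C_9^{-1}\Psi_F(1)^d$.

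If $X<\max\{2,C_8\}$, then $\Psi_F(1)^d$ is bounded above by a constant. It then suffices to produce $n$ points of $\mathcal B_F(1)$ spanning $F$, because any $n$ points spanning $F$ are in general position, so $h_F(1)\ge n$. For this we take the adapted basis of Lemma~\ref{lem:adapted-basis}, whose vectors satisfy $H_F(z_i)\le2\lambda_i\le 2c_8\le1$ once we ensure $c_8\le1/2$ (shrinking $c_8$ if necessary). Choosing $c_1$ small then handles this case.

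Otherwise, Lemma~\ref{lem:prime-interval} gives a prime $\mathfrak p$ with $X\le q=\Nm\mathfrak p\le C_9X=c_8\Psi_F(1)^d$ and $q\ge C_8$. Lemma~\ref{lem:product} then applies, giving sets $S_i=S(R_i,\mathfrak p)$ with $\prod|S_i|>q^{n-1}$.

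Next we construct the lifts. Let $z_1,\dots,z_n$ be the adapted basis of Lemma~\ref{lem:adapted-basis}, rescaled by Lemma~\ref{lem:representatives} so that $\|z_i\|_{F,v}\le1$ at finite places and $\|z_i\|_{F,v}\le 2C_2\lambda_i$ at archimedean places. Consider the $\cO_K$-module $L=\bigoplus\cO_K z_i$ and the reduction map $L\to L/\mathfrak pL\cong(\cO_K/\mathfrak p)^n$.

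Fix a line $\ell$ in $\mathbb F_q^n$. The composite map $\prod S_i\to\mathbb F_q^n/\ell$ has source of size greater than $q^{n-1}$, so it has a collision. Lifting the two colliding tuples to representatives $a_i,a_i'$ with $\|a_i\|_\infty,\|a_i'\|_\infty\le R_i/2$, the difference $x=\sum(a_i-a_i')z_i$ is nonzero and reduces into $\ell$. Its reduction is also nonzero, since distinct classes in $S_i$ give a nonzero residue difference.

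We bound the height of $x$. At finite places $\|x\|_{F,v}\le1$. At archimedean places, by the triangle inequality,
\[
\|x\|_{F,v}\le\sum_i R_i\cdot 2C_2\lambda_i=n\cdot\frac{2C_2}{8nC_2}=\tfrac14
\]
(in the complex case the triangle inequality for $\|\cdot\|^{n_v}$ still gives a bound below $1$). Hence $H_F(x)\le1$.

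Now take the $q+1\ge$ points $\ell_t=[1:t:\dots:t^{n-1}]$, $t\in\mathbb F_q$, on the rational normal curve, together with $[0:\dots:0:1]$. Any $n$ of these are linearly independent over $\mathbb F_q$ by Vandermonde. Their lifts $x_t\in L$ are therefore $K$-linearly independent: if they were dependent, clearing denominators to a primitive relation over $\cO_{K,\mathfrak p}$ would reduce to a nontrivial dependence mod $\mathfrak p$. For this step we argue with the localization at $\mathfrak p$, which is a DVR, and we use that $L_{\mathfrak p}$ is free with basis $z_i$. The projective points $[x_t]$ are also distinct, since their reductions are distinct lines. This gives $h_F(1)\ge q+1\ge X=c_8C_9^{-1}\Psi_F(1)^d$.

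The main obstacle is verifying the archimedean norm bound cleanly and the independence under reduction. The first step to handle carefully is that $L\to(\cO_K/\mathfrak p)^n$ is well defined and that the $z_i$ form an $\cO_{K,\mathfrak p}$-basis after localization. This holds simply because we define $L$ as the span of the $z_i$, independently of the adelic structure.
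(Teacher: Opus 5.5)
Your proposal is correct and follows essentially the paper's proof: the same adapted basis rescaled by Lemma~\ref{lem:representatives}, the same collision argument in $(\cO_K/\mathfrak p)^n/\ell$ fed by Lemma~\ref{lem:product}, and the same lifts of the rational normal curve; you prove independence of the lifts by localizing at $\mathfrak p$, whereas the paper uses that the $\cO_K$-coordinate determinant is nonzero modulo $\mathfrak p$. The only structural difference is that the paper has no small-$\Psi$ case: it enlarges $B_1$ so that $\Psi_E(R)^d\ge\lambda_n^{-dn/(n-1)}\ge B_1^{dn/(n-1)}$ gives $X\ge\max\{2,C_8,n-1\}$ directly, while your case split works equally well (and your parenthetical about complex places is unnecessary, since $\|\cdot\|_{F,v}$ is itself a norm there).
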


\begin{proof}
Let $c_8,C_8$ be as in Lemma~\ref{lem:product}, and let $C_9$ be the
constant from Lemma~\ref{lem:prime-interval}.  Write
\[
\lambda_i=\Lambda_i(E_R)=\frac{\Lambda_i(E)}{R}
\qquad(1\le i\le n).
\]
Set
\[
B_1
=
\max\left\{
2,\,
c_8^{-1},\,
\left(
\frac{C_9}{c_8}\max\{2,C_8,n-1\}
\right)^{(n-1)/(dn)}
\right\}.
\]
Since $R\ge B_1\Lambda_n(E)$,
\[
\lambda_n
=
\frac{\Lambda_n(E)}{R}
\le
\frac1{B_1}
\le c_8.
\]
For every $1\le m<n$,
\[
\lambda_m\cdots\lambda_n
\le
\lambda_n^{\,n-m+1},
\]
and hence
\[
(\lambda_m\cdots\lambda_n)^{-d/(n-m)}
\ge
\lambda_n^{-d(n-m+1)/(n-m)}
\ge
\lambda_n^{-d n/(n-1)}.
\]
Taking the minimum over $m$ gives
\[
\Psi_E(R)^d
=
\Psi_{E_R}(1)^d
\ge
\lambda_n^{-d n/(n-1)}
\ge
B_1^{dn/(n-1)}
\ge
\frac{C_9}{c_8}\max\{2,C_8,n-1\}.
\]
Therefore
\[
\frac{c_8}{C_9}\Psi_E(R)^d\ge\max\{2,C_8,n-1\}.
\]
Applying Lemma~\ref{lem:prime-interval} with
$X=(c_8/C_9)\Psi_E(R)^d$ gives a nonzero prime ideal
$\mathfrak p\subset\cO_K$, with $q=\Nm\mathfrak p$, such that
\[
\max\{2,C_8,n-1\}
\le
\frac{c_8}{C_9}\Psi_E(R)^d
\le q
\le c_8\Psi_E(R)^d.
\]
Applying Lemma~\ref{lem:adapted-basis} to $E_R$ and then
Lemma~\ref{lem:representatives} to the resulting basis vectors, we obtain a
$K$-basis $u_1,\ldots,u_n$ of $E_R$ satisfying
\[
\|u_i\|_{E_R,v}\le
\begin{cases}
1,&v\nmid\infty,\\
2C_2\lambda_i,&v\mid\infty.
\end{cases}
\]
All reductions modulo $\mathfrak p$ below are taken in the coordinate
space $\cO_K^n$ relative to this basis.  Put
\[
R_i=
\frac{1}{8nC_2\lambda_i}
\qquad(1\le i\le n).
\]
Lemma~\ref{lem:product} gives $R_i\ge1$ for all $i$; if $S_i=S(R_i,\mathfrak p)$ for $1\le i\le n$, then
\[
|S_1\times\cdots\times S_n|
=
\prod_{i=1}^n|S_i|
>
q^{n-1}.
\]

Let
$\ell\subset(\cO_K/\mathfrak p)^n$ be a one-dimensional subspace.
Since
\[
\left|(\cO_K/\mathfrak p)^n/\ell\right|
=q^{n-1}
<|S_1\times\cdots\times S_n|,
\]
the quotient map from $S_1\times\cdots\times S_n$ to
$(\cO_K/\mathfrak p)^n/\ell$ is not injective.  Hence there are
distinct $\bar a,\bar b\in S_1\times\cdots\times S_n$ with
$0\ne\bar a-\bar b\in\ell$.  Write
$\bar a=(\bar a_1,\ldots,\bar a_n)$ and
$\bar b=(\bar b_1,\ldots,\bar b_n)$, and choose representatives
$a_i,b_i\in\cO_K$ as in Lemma~\ref{lem:residue},
so that
\[
\|a_i\|_\infty,\|b_i\|_\infty\le\frac{R_i}{2},
\qquad
\|a_i-b_i\|_\infty\le R_i.
\]
Put
\[
x_\ell
=
\sum_{i=1}^n(a_i-b_i)u_i.
\]
Its coefficient vector with respect to the basis
$u_1,\ldots,u_n$, reduced modulo $\mathfrak p$, is the nonzero vector
$\bar a-\bar b\in\ell$.  Hence $x_\ell\ne0$.

For $v\nmid\infty$, we have $a_i-b_i\in\cO_K$ and hence
$|a_i-b_i|_v\le1$.  Since $\|\cdot\|_{E_R,v}$ is ultrametric,
\[
\|x_\ell\|_{E_R,v}
\le
\max_{1\le i\le n}
|a_i-b_i|_v\|u_i\|_{E_R,v}
\le1.
\]
For $v\mid\infty$,
\[
\begin{aligned}
\|x_\ell\|_{E_R,v}
&\le
\sum_{i=1}^n
|a_i-b_i|_v\|u_i\|_{E_R,v}\\
&\le
\sum_{i=1}^n
2R_iC_2\lambda_i
=
\frac14.
\end{aligned}
\]
Hence $H_{E_R}(x_\ell)\le1$, so
$[x_\ell]\in\mathcal{B}_{E_R}(1)$.

Consider the rational normal curve used in \cite{BHPT}*{Theorem~1}:
\[
\mathcal R_{\mathfrak p}
=
\{[1:t:t^2:\cdots:t^{n-1}]:t\in\cO_K/\mathfrak p\}
\cup
\{[0:\cdots:0:1]\}
\subset\mathbb P^{n-1}(\cO_K/\mathfrak p).
\]
The $q=\Nm\mathfrak p$ values of $t$ give distinct points, none equal to $[0:\cdots:0:1]$, so $\mathcal R_{\mathfrak p}$ has $q+1$ points.
For distinct parameters
$t_1,\ldots,t_n$, the matrix whose columns are the corresponding vectors
$(1,t_i,\ldots,t_i^{n-1})^{\mathsf T}$ has determinant
\[
\prod_{1\le i<j\le n}(t_j-t_i)\ne0.
\]
If $[0:\cdots:0:1]$ occurs, expansion along the corresponding column gives
a $(n-1)\times(n-1)$ Vandermonde determinant.  Thus any nonzero
representatives of $n$ distinct points of $\mathcal R_{\mathfrak p}$ are
linearly independent over $\cO_K/\mathfrak p$.

For each $\ell\in\mathcal R_{\mathfrak p}$, choose the vector
$x_\ell$ constructed above. If $\ell_1,\ldots,\ell_n$ are distinct, the
coordinate matrix of $x_{\ell_1},\ldots,x_{\ell_n}$ with respect to
$u_1,\ldots,u_n$ has entries in $\cO_K$, because each $x_\ell$ is an
$\cO_K$-linear combination of this basis.  Its reduction modulo
$\mathfrak p$ has as columns nonzero representatives of
$\ell_1,\ldots,\ell_n$, which are $n$ distinct points of
$\mathcal R_{\mathfrak p}$.  Its determinant is therefore nonzero modulo
$\mathfrak p$, and hence $x_{\ell_1},\ldots,x_{\ell_n}$ are
$K$-linearly independent.

Since $q\ge n-1$, the set $\mathcal R_{\mathfrak p}$ has at least $n$ points. The points $[x_\ell]$ are distinct: if $[x_\ell]=[x_{\ell'}]$ for distinct $\ell,\ell'\in\mathcal R_{\mathfrak p}$, extend $\{\ell,\ell'\}$ to $n$ distinct points of $\mathcal R_{\mathfrak p}$. The corresponding vectors would then be linearly dependent, contradicting the fact proved above that the vectors attached to any $n$ distinct points of $\mathcal R_{\mathfrak p}$ are linearly independent. Thus the $q+1$ points
\[
[x_\ell]\in\mathcal{B}_{E_R}(1)
\qquad(\ell\in\mathcal R_{\mathfrak p})
\]
have the property that any $n$ of them span $E_R$.  With $c_1=c_8/C_9$,
\[
h_E(R)
=
h_{E_R}(1)
\ge q+1
\ge q
\ge
c_1\Psi_E(R)^d.
\]
\end{proof}

\begin{proof}[Proof of Theorem~\ref{thm:main}]
Let $c_1$ and $B_1$ be as in Proposition~\ref{prop:lower}.  If
$R\ge B_1\Lambda_n(E)$, then $B_1\ge2$ gives
\[
 \Lambda_n(E_R)=\frac{\Lambda_n(E)}R\le1.
\]
Proposition~\ref{prop:upper}, Proposition~\ref{prop:lower}, and
\eqref{eq:intro-hg} therefore give
\[
c_1\Psi_E(R)^d
\le h_E(R)
\le (n-1)g_E(R)
\le (n-1)C_6\Psi_E(R)^d.
\]
Thus \eqref{eq:main} holds with $C_1=(n-1)C_6$.
\end{proof}

\subsection{Proof of the balanced representative lemma}\label{subsec:balanced-proof}
\begin{proof}[Proof of Lemma~\ref{lem:representatives}]
For a finite place $v$, let $\cO_v$ denote the valuation ring of $K_v$. For $0\ne x\in F$, the homogeneity and ultrametricity of the local norm give a fractional $\cO_v$-ideal
\[
\{c\in K_v:\|cx\|_{F,v}\le1\}=\xi_v\cO_v
\]
for some $\xi_v\in K_v^\times$ satisfying $|\xi_v|_v\|x\|_{F,v}=1$. By the adelic condition, this ideal is $\cO_v$ for all but finitely many finite places. Hence
\[
\mathfrak a_x=\{c\in K:\|cx\|_{F,v}\le1\text{ for every }v\nmid\infty\}
\]
is a nonzero fractional ideal. At each finite place, we have
\[
\mathfrak a_x\cO_v=\xi_v\cO_v.
\]
By Minkowski's theorem (see \cite{Neukirch}*{Chapter~I, Section~6, Exercise~3}), we can choose an integral ideal $\mathfrak c$ in the class $[\mathfrak a_x]^{-1}$ such that
\begin{equation}\label{eq:class-rep-norm-bound}
1\leq \Nm\mathfrak c\le
\left(\frac4\pi\right)^{r_2}
\frac{d!}{d^d}|D_K|^{1/2}.
\end{equation}
Write $\mathfrak a_x\mathfrak c=(a_0)$.  Since
$\mathfrak c\subset\cO_K$, one has $a_0\in\mathfrak a_x$.
For $v\nmid\infty$, the equality
$a_0\cO_v=\xi_v\mathfrak c\cO_v$ shows that $a_0/\xi_v$ is a local generator of $\mathfrak c\cO_v$. Hence
\[
\|a_0x\|_{F,v}=|a_0|_v\|x\|_{F,v}=|a_0/\xi_v|_v\le1.
\]
Taking the product over the finite places and using the ideal norm gives
\begin{equation}
\prod_{v\nmid\infty}\|a_0x\|_{F,v}^{n_v/d}=(\Nm\mathfrak c)^{-1/d}.
\end{equation}
Thus the archimedean contribution is
\begin{equation}
Q:=\prod_{v\mid\infty}\|a_0x\|_{F,v}^{n_v/d}
=H_F(x)(\Nm\mathfrak c)^{1/d}.
\end{equation}
Applying \eqref{eq:class-rep-norm-bound}, we deduce that
\begin{equation}
Q\le\left(\left(\frac4\pi\right)^{r_2}\frac{d!}{d^d}|D_K|^{1/2}\right)^{1/d}H_F(x). 
\end{equation}
By Dirichlet's theorem, $r=r_1+r_2-1$ is the unit rank of $K$.  If $r=0$, take
$\varepsilon=1$ and put $C_{\mathrm u}=0$.
Otherwise, let $H$ denote the absolute multiplicative height. Apply Bugeaud and Gy\H{o}ry \cite{BugeaudGyory}*{pp.~70--71, Lemma~1(ii)} with $S$ equal to the set of archimedean places. Their constant involves the lower height bound $\delta_K$; using the explicit degree-dependent choice of $\delta_K$ given immediately before Lemma~1, we obtain an effectively computable constant $C_{\mathrm{BG}}>0$, depending only on $d$ and $r_2$, and fundamental units
$\varepsilon_1,\ldots,\varepsilon_r$ such that
\[
\log H(\varepsilon_j)\le C_{\mathrm{BG}}R_K
\qquad(1\le j\le r).
\]
Put
\[
\ell_K(\varepsilon)=
(n_v\log|\varepsilon|_v)_{v\mid\infty}.
\]
Since $\varepsilon$ is a unit, the product formula gives
\[
\prod_{v\mid\infty}|\varepsilon|_v^{n_v}=1.
\]
Hence
\begin{equation}
\sum_{v\mid\infty}n_v\bigl|\log|\varepsilon|_v\bigr|
=
2\sum_{v\mid\infty}n_v\max\{\log|\varepsilon|_v,0\}
=2d\log H(\varepsilon).
\end{equation}
Therefore
\begin{equation}
\max_{v\mid\infty}\bigl|\bigl(\ell_K(\varepsilon_j)\bigr)_v\bigr|
\le 2d C_{\mathrm{BG}}R_K.
\end{equation}
Set
\[
y=
\bigl(n_v(\log\|a_0x\|_{F,v}-\log Q)\bigr)_{v\mid\infty}.
\]
Then $y$ lies in the hyperplane $\sum_v y_v=0$, and
$\ell_K(\varepsilon_1),\ldots,\ell_K(\varepsilon_r)$ form a real basis
of this hyperplane.  Write
$y=\sum_{j=1}^r t_j\ell_K(\varepsilon_j)$, choose $m_j\in\mathbb Z$ with
$|t_j+m_j|\le1/2$, and put
\[
\varepsilon=\prod_{j=1}^{r}\varepsilon_j^{m_j}.
\]
Then
\[
\max_{v\mid\infty}\left|y_v+\bigl(\ell_K(\varepsilon)\bigr)_v\right|
\leq d r C_{\mathrm{BG}}R_K
=:C_{\mathrm u}.
\]
When $r>0$, for every $v\mid\infty$,
\[
 n_v\log\frac{\|\varepsilon a_0x\|_{F,v}}{Q}
 =
 y_v+\bigl(\ell_K(\varepsilon)\bigr)_v,
\]
and therefore
\[
 \|\varepsilon a_0x\|_{F,v}
 \le e^{C_{\mathrm u}/n_v}Q\le e^{C_{\mathrm u}}Q.
\]
When $r=0$, we have $\varepsilon=1$, $C_{\mathrm u}=0$, and $Q=\|a_0x\|_{F,v}$. Thus in all cases
\[
\|\varepsilon a_0x\|_{F,v}\le e^{C_{\mathrm u}}Q
\qquad(v\mid\infty).
\]
Since $|\varepsilon|_v=1$ at every finite place, $a=\varepsilon a_0$
satisfies the finite-place bound in Lemma~\ref{lem:representatives}.  Using the bound for $Q$ above, the archimedean estimate holds with
\[
C_2
=
e^{C_{\mathrm u}}
\left(
\left(\frac4\pi\right)^{r_2}
\frac{d!}{d^d}|D_K|^{1/2}
\right)^{1/d}.
\]
\end{proof}

\subsection{Lattice and box-count estimates}\label{subsec:box-count-proof}
Set
\[
K_\infty
=
K\otimes_{\mathbb Q}\mathbb R
\simeq
\prod_{v\mid\infty}K_v
\simeq
\mathbb R^{r_1}\times\mathbb C^{r_2}.
\]
Let $\iota:K\hookrightarrow K_\infty$ be the Minkowski embedding, obtained by choosing one embedding representing each archimedean place.  We use the max norm on $K_\infty$:
\[
\|(x_v)_{v\mid\infty}\|_\infty=\max_{v\mid\infty}|x_v|.
\]
We equip $K_\infty$ with the product of the usual Lebesgue measures and write
\[
B_\infty(R)=\{x\in K_\infty:\|x\|_\infty\le R\}
\qquad(R\ge0).
\]
The norm agrees with the notation already used on $K$: $\|\iota(a)\|_\infty=\|a\|_\infty$ for $a\in K$.  Its volume is
\[
\vol B_\infty(R)=2^{r_1}\pi^{r_2}R^d.
\]
For a full lattice $L\subset K_\infty$, denote its covolume by
$\covol(L)$ and its covering radius by
\[
\rho_\infty(L)
=
\sup_{x\in K_\infty}\inf_{\lambda\in L}
\|x-\lambda\|_\infty.
\]
\[
\covol\bigl(\iota(\mathfrak a)\bigr)
=2^{-r_2}|D_K|^{1/2}\Nm\mathfrak a
\]
for every nonzero integral ideal $\mathfrak a\subset\cO_K$; see \cite{Neukirch}*{Chapter~I, Proposition~5.2}.
\begin{lemma}
\label{lem:ideal-covering}
There is a constant $C_3>0$, effectively computable from
$d, r_2, |D_K|$, such that every nonzero integral ideal
$\mathfrak a\subset\cO_K$ satisfies
\begin{equation}
    \rho_\infty\bigl(\iota(\mathfrak a)\bigr)
\le C_3(\Nm\mathfrak a)^{1/d}.
\end{equation}
\end{lemma}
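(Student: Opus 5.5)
The approach is to bound the covering radius of $\iota(\mathfrak a)$ by a constant multiple of the last successive minimum of $\iota(\mathfrak a)$ with respect to the max-norm box $B_\infty(1)$. I then bound that minimum using Minkowski's second theorem together with a lower bound on the first minimum that comes from the field norm.

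\textbf{Step 1: lower bound on the first minimum.} Let $\mu_1\le\cdots\le\mu_d$ be the successive minima of the lattice $\iota(\mathfrak a)\subset K_\infty\simeq\mathbb R^d$ with respect to the convex symmetric body $B_\infty(1)$. For $0\ne a\in\mathfrak a$, the ideal $(a)$ is contained in $\mathfrak a$, so $|\Nm_{K/\mathbb Q}(a)|=\Nm((a))\ge\Nm\mathfrak a$. On the other hand, $|\Nm_{K/\mathbb Q}(a)|=\prod_\sigma|\sigma(a)|\le\|a\|_\infty^d$. Hence $\|a\|_\infty\ge(\Nm\mathfrak a)^{1/d}$, and so $\mu_1\ge(\Nm\mathfrak a)^{1/d}$.

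\textbf{Step 2: upper bound on the last minimum.} Minkowski's second theorem gives
\[
\mu_1\cdots\mu_d\,\vol B_\infty(1)\le 2^d\covol(\iota(\mathfrak a)).
\]
Since $\mu_i\ge\mu_1$ for all $i$, this yields
\[
\mu_d\le \frac{2^d\covol(\iota(\mathfrak a))}{\vol B_\infty(1)\,\mu_1^{d-1}}
=\frac{2^d\,2^{-r_2}|D_K|^{1/2}\Nm\mathfrak a}{2^{r_1}\pi^{r_2}(\Nm\mathfrak a)^{(d-1)/d}}
= C'(\Nm\mathfrak a)^{1/d},
\]
where $C'=2^{d-r_1-r_2}\pi^{-r_2}|D_K|^{1/2}$.

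\textbf{Step 3: from the last minimum to the covering radius.} Choose linearly independent $w_1,\dots,w_d\in\iota(\mathfrak a)$ with $\|w_i\|_\infty\le\mu_d$; these exist by the definition of the successive minima and compactness. Let $x\in K_\infty$ be arbitrary and write $x=\sum t_iw_i$ with $t_i\in\mathbb R$. Rounding each $t_i$ to the nearest integer gives a point of the sublattice $\bigoplus\mathbb Z w_i\subset\iota(\mathfrak a)$ at max-distance at most $\tfrac12\sum\|w_i\|_\infty\le\tfrac d2\mu_d$ from $x$. Note that only $\mathbb Z$-independence of the $w_i$ is needed, not that they form a basis of $\iota(\mathfrak a)$. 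Therefore
\[
\rho_\infty(\iota(\mathfrak a))\le\tfrac d2\mu_d\le \tfrac d2C'(\Nm\mathfrak a)^{1/d},
\]
and we may take $C_3=\tfrac d2 C'$, which is explicit and depends only on $d$, $r_2$ and $|D_K|$.

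The only substantive input is the norm inequality in Step 1; there, one has to keep in mind that the max norm runs over all embeddings, so that $\prod_\sigma|\sigma(a)|\le\|a\|_\infty^d$ holds with each complex place contributing twice. Everything else is standard geometry of numbers, so I do not expect a real obstacle.
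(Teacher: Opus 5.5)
Your proposal is correct and follows the paper's proof step for step, and it arrives at the same constant $C_3=\frac d2(2/\pi)^{r_2}|D_K|^{1/2}$. The shared steps are the norm lower bound $\mu_1\ge(\Nm\mathfrak a)^{1/d}$, then Minkowski's second theorem giving $\mu_d\le(2/\pi)^{r_2}|D_K|^{1/2}(\Nm\mathfrak a)^{1/d}$, then rounding coordinates with respect to $d$ independent lattice vectors of norm at most $\mu_d$ (the paper's centered parallelotope). One cosmetic slip: in Step~2 the middle ``$=$'' should be ``$\le$'', because there you replace $\mu_1^{d-1}$ by its lower bound.
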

\begin{proof}
Let
\[
\lambda_1^{(\infty)}\le\cdots\le\lambda_d^{(\infty)}
\]
be the successive minima of the lattice $\iota(\mathfrak a)$ with respect
to $B_\infty(1)$.  If $0\ne\alpha\in\mathfrak a$, then
$(\alpha)\subseteq\mathfrak a$, so
\[
|\Nm_{K/\mathbb Q}(\alpha)|\ge \Nm\mathfrak a.
\]
On the other hand,
\[
|\Nm_{K/\mathbb Q}(\alpha)|
=\prod_{v\mid\infty}|\alpha|_v^{n_v}
\le\|\iota(\alpha)\|_\infty^d.
\]
Hence
\[
\lambda_1^{(\infty)}\ge(\Nm\mathfrak a)^{1/d}.
\]

By Minkowski's theorem \cite{Cassels}*{Chapter~VIII, \S4.3, Theorem~V},
\[
\lambda_1^{(\infty)}\cdots\lambda_d^{(\infty)}
\le
\frac{2^d\covol(\iota(\mathfrak a))}{\vol B_\infty(1)}
=
\left(\frac{2}{\pi}\right)^{r_2}|D_K|^{1/2}\Nm\mathfrak a.
\]
Since $\lambda_i^{(\infty)}\ge\lambda_1^{(\infty)}$ for every $i$,
\[
\lambda_d^{(\infty)}
\bigl(\lambda_1^{(\infty)}\bigr)^{d-1}
\le
\lambda_1^{(\infty)}\cdots\lambda_d^{(\infty)}.
\]
Together with the lower bound for $\lambda_1^{(\infty)}$, this gives
\[
\lambda_d^{(\infty)}
\le
\left(\frac{2}{\pi}\right)^{r_2}|D_K|^{1/2}
(\Nm\mathfrak a)^{1/d}.
\]

Since $\iota(\mathfrak a)$ is discrete and $B_\infty(1)$ is compact, we can choose linearly independent vectors
$u_1,\ldots,u_d\in\iota(\mathfrak a)$ with
\[
\|u_i\|_\infty\le\lambda_d^{(\infty)}
\qquad(1\le i\le d).
\]
Every point of the centered fundamental parallelotope of the sublattice
$\sum_{i=1}^d\mathbb Z u_i$,
\[
\left\{
\sum_{i=1}^{d} t_i u_i
:
-\frac12\le t_i<\frac12
\text{ for }1\le i\le d
\right\},
\]
has norm at most
\[
\frac12\sum_{i=1}^{d}\|u_i\|_\infty
\le
\frac{d}{2}\lambda_d^{(\infty)}.
\]
Thus this parallelotope is contained in
$B_\infty\bigl(\frac d2\lambda_d^{(\infty)}\bigr)$.
Take
\[
C_3
=
\frac{d}{2}\left(\frac{2}{\pi}\right)^{r_2}|D_K|^{1/2}.
\]
Because $\sum_{i=1}^d\mathbb Z u_i\subset\iota(\mathfrak a)$,
\[
\rho_\infty\bigl(\iota(\mathfrak a)\bigr)
\le
\rho_\infty\left(\sum_{i=1}^d\mathbb Z u_i\right)
\le
\frac{d}{2}\lambda_d^{(\infty)}
\le
C_3(\Nm\mathfrak a)^{1/d}.
\]
\end{proof}
\begin{proof}
Set $L=\iota(\cO_K)$ and $\rho=\rho_\infty(L)$.  Then
\[
\covol(L)=2^{-r_2}|D_K|^{1/2},
\qquad
N_K(R)=\#\bigl(L\cap B_\infty(R)\bigr).
\]
Lemma~\ref{lem:ideal-covering}, applied to $\mathfrak a=\cO_K$, shows that $\rho\le C_3$.

Start with a bounded half-open fundamental parallelotope for $L$. Translate each of its points by the negative of a nearest lattice point, resolving ties by a fixed enumeration of $L$. The tie-breaking makes the selection measurable, and translation by lattice vectors preserves the class modulo $L$; hence the resulting set $\mathcal F$ is a measurable fundamental domain. Moreover,
\[
K_\infty=\bigsqcup_{\lambda\in L}(\lambda+\mathcal F),
\qquad
\vol(\mathcal F)=2^{-r_2}|D_K|^{1/2},
\qquad
\mathcal F\subset B_\infty(\rho).
\]

If $\lambda\in L\cap B_\infty(R)$, then
\[
\lambda+\mathcal F\subset B_\infty(R+\rho).
\]
These translates of $\mathcal F$ are disjoint, and hence
\[
N_K(R)\,2^{-r_2}|D_K|^{1/2}
\le
2^{r_1}\pi^{r_2}(R+\rho)^d.
\]
On the other hand, suppose $R\ge\rho$ and let $x\in B_\infty(R-\rho)$. Write
$x=\lambda+y$ with $\lambda\in L$ and $y\in\mathcal F$.  Then
\[
\|\lambda\|_\infty
\le
\|x\|_\infty+\|y\|_\infty
\le R,
\]
so $\lambda\in L\cap B_\infty(R)$.  Thus
\[
B_\infty(R-\rho)
\subset
\bigcup_{\lambda\in L\cap B_\infty(R)}(\lambda+\mathcal F),
\]
and hence
\[
2^{r_1}\pi^{r_2}(R-\rho)^d
\le
N_K(R)\,2^{-r_2}|D_K|^{1/2}
\qquad(R\ge\rho).
\]

For $R\ge\max\{1,2C_3\}$, we have $\rho\le R/2$, and therefore
\[
N_K(R)
\ge
\frac{2^{r_1}\pi^{r_2}}{2^{-r_2}|D_K|^{1/2}}
\left(\frac R2\right)^d
\ge
\frac{2^{r_1}\pi^{r_2}}
{2^{d+1-r_2}|D_K|^{1/2}}(1+R^d).
\]
For $0\le R<\max\{1,2C_3\}$, the lattice point $0$ gives $N_K(R)\ge1$, hence
\[
N_K(R)
\ge
\frac{1}{1+\max\{1,2C_3\}^d}(1+R^d).
\]
Thus part~(i) holds with
\[
c_4=
\min\left\{
\frac{2^{r_1}\pi^{r_2}}
{2^{d+1-r_2}|D_K|^{1/2}},
\frac{1}{1+\max\{1,2C_3\}^d}
\right\}.
\]

The upper volume estimate and $\rho\le C_3$ give
\[
N_K(R)
\le
\frac{2^{r_1}\pi^{r_2}}{2^{-r_2}|D_K|^{1/2}}
\bigl(R+C_3\bigr)^d.
\]
Since $R+C_3\le(1+C_3)(1+R)$ and
$(1+R)^d\le2^{d-1}(1+R^d)$, we obtain
\[
N_K(R)\le b(1+R^d),
\]
where
\[
b=
\frac{2^{r_1}\pi^{r_2}}{2^{-r_2}|D_K|^{1/2}}\,
2^{d-1}
\bigl(1+C_3\bigr)^d.
\]
Taking
\[
C_4=\max\{b,\,2^d b/c_4,\,1\}
\]
gives
\[
N_K(R)\le C_4(1+R^d)
\]
and
\[
N_K(2R)
\le
b(1+2^d R^d)
\le
2^d b(1+R^d)
\le
\frac{2^d b}{c_4}N_K(R)
\le
C_4N_K(R).
\]
\end{proof}

\subsection{Proof of the weighted-relation lemma}\label{subsec:weighted-proof}
\begin{proof}[Proof of Lemma~\ref{lem:weighted-relation}]

Let $c_4$ and $C_4$ be the constants from Lemma~\ref{lem:box-count}. Set
\[
\mathcal N(T)=\prod_{i=1}^kN_K(T/\nu_i),
\qquad
V=\nu_1\cdots\nu_k,
\qquad
A=2C_4\max\{1,(kM)^d\}.
\]
By the lower bound in Lemma~\ref{lem:box-count},
\[
\mathcal N(T)
\ge
c_4^k\prod_{i=1}^k\left(1+(T/\nu_i)^d\right)
\ge
c_4^k
\frac{T^{d k}}{V^d}.
\]
Hence, for $T\ge1$,
\[
\frac{\mathcal N(T)}{1+T^d}
\ge
\frac{c_4^k}{2V^d}
T^{d(k-1)}
\longrightarrow\infty.
\]

Choose $0<T_0<\min_{1\le i\le k}\nu_i$.  If $0\ne a\in\cO_K$, then
$1\le |\Nm_{K/\mathbb Q}(a)|\le\|a\|_\infty^d$, so $N_K(R)=1$ for
$0\le R<1$.  Thus
\[
\mathcal N(T_0)=1\le A(1+T_0^d).
\]
Since $\mathcal N(T)/(1+T^d)\to\infty$, choose $T$ minimal in
$\{2^jT_0:j\ge1\}$ such that
\begin{equation}\label{eq:weighted-many}
\mathcal N(T)>A(1+T^d).
\end{equation}
By minimality,
\begin{equation}\label{eq:weighted-predecessor}
\mathcal N(T/2)\le A\bigl(1+(T/2)^d\bigr).
\end{equation}

Define
\[
\mathcal D=
\left\{
(\delta_1,\ldots,\delta_k)\in\cO_K^k\setminus\{0\}:
\|\delta_i\|_\infty\le\frac{2T}{\nu_i}
\text{ for }1\le i\le k
\right\}.
\]

Applying \eqref{eq:box-doubling} twice gives, for $1\le i\le k$,
\[
N_K(2T/\nu_i)
\le C_4^2N_K(T/(2\nu_i)).
\]
Multiplying these inequalities yields
\[
\mathcal N(2T)
\le
C_4^{2k}\mathcal N(T/2).
\]

Put $X=(T/2)^d$.  The lower bound in
Lemma~\ref{lem:box-count}, together with
\eqref{eq:weighted-predecessor}, gives
\[
c_4^k\frac{X^k}{V^d}
\le
c_4^k\prod_{i=1}^k\left(1+\frac{X}{\nu_i^d}\right)
\le \mathcal N(T/2)\le
A(1+X).
\]
If $X<1$, then $1+X<2$.  If $X\ge1$, then $1+X\le2X$, and hence
\[
X^{k-1}
\le
\frac{2A}{c_4^k}V^d,
\qquad
X\le
\left(\frac{2A}{c_4^k}\right)^{1/(k-1)}
V^{d/(k-1)}.
\]
In either case,
\[
1+X
\le
\left[
2+
\left(
\frac{2A}{c_4^k}
\right)^{1/(k-1)}
\right]
\left[1+V^{d/(k-1)}\right].
\]
Therefore
\[
\mathcal N(2T)
\le
A\,C_4^{2k}
\left[
2+
\left(
\frac{2A}{c_4^k}
\right)^{1/(k-1)}
\right]
\left[1+V^{d/(k-1)}\right].
\]
Since $\mathcal N(2T)$ counts all tuples satisfying the coordinate bounds,
including the zero tuple,
\[
|\mathcal D|=\mathcal N(2T)-1.
\]
Thus part~(i) holds with
\[
C_5
=
A\,C_4^{2k}
\left[
2+
\left(
\frac{2A}{c_4^k}
\right)^{1/(k-1)}
\right].
\]

Now let $y_1,\ldots,y_k\in\cO_K$ satisfy the bounds in the statement, and define
\[
\mathcal A_T=
\left\{
(a_1,\ldots,a_k)\in\cO_K^k:
\|a_i\|_\infty\le\frac{T}{\nu_i}
\text{ for }1\le i\le k
\right\}.
\]
Then $|\mathcal A_T|=\mathcal N(T)$.  Define
\[
\Theta_y:\mathcal A_T\longrightarrow\cO_K,
\qquad
\Theta_y(a_1,\ldots,a_k)=\sum_{i=1}^k a_i y_i.
\]
For every archimedean place $v$,
\[
|\Theta_y(a_1,\ldots,a_k)|_v
\le
\sum_{i=1}^k|a_i|_v|y_i|_v
\le
kMT.
\]
Thus $\|\Theta_y(a)\|_\infty\le kMT$ for $a\in\mathcal A_T$.  By
Lemma~\ref{lem:box-count} and \eqref{eq:weighted-many},
\[
\begin{aligned}
|\Theta_y(\mathcal A_T)|
&\le N_K(kMT)\\
&\le
C_4
\max\{1,(kM)^d\}(1+T^d)\\
&=
\frac{A}{2}(1+T^d)
<
\mathcal N(T)
=
|\mathcal A_T|.
\end{aligned}
\]
Hence $\Theta_y$ is not injective.  Take distinct
$a,b\in\mathcal A_T$ with $\Theta_y(a)=\Theta_y(b)$ and put $\delta_i=a_i-b_i$ for $1\le i\le k$.
Then $(\delta_1,\ldots,\delta_k)\ne0$,
$\|\delta_i\|_\infty\le2T/\nu_i$, and
$\delta_1y_1+\cdots+\delta_ky_k=0$, so
$(\delta_1,\ldots,\delta_k)\in\mathcal D$.
\end{proof}

\subsection{Proofs of the residue and product lemmas}\label{subsec:residue-product-proofs}
\begin{proof}[Proof of Lemma~\ref{lem:residue}]

Let $c_4$ be the lower-bound constant from Lemma~\ref{lem:box-count}, and put
$c_7=4^{-d}c_4$.  Suppose first that
$R\le\frac12q^{1/d}$, and let $S(R,\mathfrak p)$ be the image
modulo $\mathfrak p$ of
\[
\{a\in\cO_K:\|a\|_\infty\le R/2\}.
\]
If two distinct elements of this box have the same image modulo
$\mathfrak p$, their nonzero difference $\alpha$ lies in $\mathfrak p$.
Thus $(\alpha)\subseteq\mathfrak p$, and
\[
q\le |\Nm_{K/\mathbb Q}(\alpha)|\le\|\alpha\|_\infty^d
\le R^d\le2^{-d}q<q,
\]
a contradiction.  Thus reduction is injective, and Lemma~\ref{lem:box-count}
gives
\[
|S(R,\mathfrak p)|
=N_K(R/2)
\ge c_4\,2^{-d}R^d
\ge c_7\min\{q,R^d\}.
\]

Suppose now that $R>\frac12q^{1/d}$.  Put
$R_*=\frac12q^{1/d}$ and let $S(R,\mathfrak p)$ be the image
modulo $\mathfrak p$ of
\[
\{a\in\cO_K:\|a\|_\infty\le R_*/2\}.
\]
Reduction is injective on this smaller box because any nonzero difference $\alpha\in\mathfrak p$ would satisfy
\[
q\le|\Nm_{K/\mathbb Q}(\alpha)|\le R_*^d=2^{-d}q<q.
\]
Since Lemma~\ref{lem:box-count} holds for every nonnegative radius,
\[
|S(R,\mathfrak p)|
=N_K(R_*/2)
\ge c_4\,2^{-d}R_*^d
=c_7q
\ge c_7\min\{q,R^d\}.
\]
Every class in $S(R,\mathfrak p)$ has a representative of norm at most
$R_*/2<R/2$.
\end{proof}

\begin{proof}[Proof of Lemma~\ref{lem:product}]

Let $c_7$ be the constant from Lemma~\ref{lem:residue}.  Choose $0<c_8<1$ satisfying
\[
c_8\le\frac{1}{8nC_2},
\qquad
c_8<\frac{c_7^{\,n/d}}{8nC_2},
\]
and, for every $1\le m<n$,
\[
c_8^{n-m}
<
c_7^n(8nC_2)^{-d(n-m+1)}.
\]
Put
\[
C_8=\max\{2,2c_7^{-n}\}.
\]
Since $\lambda_i\le\lambda_n\le c_8\le(8nC_2)^{-1}$,
\[
R_i\ge R_n\ge1
\qquad(1\le i\le n).
\]
By Lemma~\ref{lem:residue},
\[
\prod_{i=1}^n|S_i|
\ge
c_7^n\prod_{i=1}^n\min\{q,R_i^d\}.
\]
If $R_n^d\ge q$, then
\[
\prod_{i=1}^n|S_i|
\ge c_7^nq^n
\ge2q^{n-1}
>q^{n-1}.
\]

Assume now that $R_n^d<q$, and put
\[
m=\min\{i:R_i^d<q\}.
\]
Since $R_1\ge\cdots\ge R_n$,
\[
\prod_{i=1}^n|S_i|
\ge
c_7^nq^{m-1}\prod_{i=m}^nR_i^d.
\]
If $m=n$, then
\[
c_7^nR_n^d
\ge
c_7^n(8nC_2c_8)^{-d}>1,
\]
so $\prod_i|S_i|>q^{n-1}$.

Suppose $m<n$.  Then
\[
\begin{aligned}
c_7^n\prod_{i=m}^nR_i^d
&\ge
c_7^n(8nC_2)^{-d(n-m+1)}
(\lambda_m\cdots\lambda_n)^{-d}\\
&>
c_8^{n-m}(\lambda_m\cdots\lambda_n)^{-d}.
\end{aligned}
\]
Since
\[
\Psi_F(1)
\le
\Psi_{F,m}(1)
=
(\lambda_m\cdots\lambda_n)^{-1/(n-m)}
\]
and $q\le c_8\Psi_F(1)^d$, we have
\[
q^{n-m}
\le
c_8^{n-m}\Psi_F(1)^{d(n-m)}
\le
c_8^{n-m}(\lambda_m\cdots\lambda_n)^{-d}.
\]
Hence
\[
\prod_{i=1}^n|S_i|
>q^{m-1}q^{n-m}=q^{n-1}.
\]
\end{proof}

\subsection{Proof of the prime-ideal interval lemma}\label{subsec:prime-interval-proof}
\begin{proof}
If $K=\mathbb Q$, Bertrand's postulate applied to $\lfloor X\rfloor$ gives
a rational prime $p$ with $X<p<2X$; thus one may take $C_9=2$.
Assume now that $d\ge2$.  Garcia and Lee
\cite{GarciaLee}*{Theorem~1, (A1)--(A2)}
give an effectively computable constant $U>0$, depending only on
$d$ and $|D_K|$, such that, for every $x\ge2$,
\[
\left|
\sum_{\substack{0\ne\mathfrak q\subset\cO_K\ \mathrm{prime}\\
\Nm\mathfrak q\le x}}
\frac{\log\Nm\mathfrak q}{\Nm\mathfrak q}
-\log x
\right|\le U.
\]
Put $C_9=\exp(2U+1)$.  Subtracting the estimates at $C_9X$ and $X$ gives
\[
\sum_{\substack{\mathfrak q\ \mathrm{prime}\\
X<\Nm\mathfrak q\le C_9X}}
\frac{\log\Nm\mathfrak q}{\Nm\mathfrak q}
\ge \log C_9-2U
=1>0.
\]
Hence there is a nonzero prime ideal $\mathfrak p$ with
$X<\Nm\mathfrak p\le C_9X$.
\end{proof}

\Addresses

\begin{bibdiv}
\begin{biblist}
\bib{Ange}{article}{author={Ange, Thomas},title={Le th\'eor\`eme de Schanuel dans les fibr\'es ad\'eliques hermitiens},date={2014},journal={Manuscripta Math.},volume={144},number={3-4},pages={565--608}}
\bib{BalkoCibulkaValtr}{article}{author={Balko, Martin},author={Cibulka, Josef},author={Valtr, Pavel},title={Covering lattice points by subspaces and counting point--hyperplane incidences},date={2019},journal={Discrete Comput. Geom.},volume={61},number={2},pages={325--354}}
\bib{BHPT}{article}{author={B{\'a}r{\'a}ny, Imre},author={Harcos, Gergely},author={Pach, J{\'a}nos},author={Tardos, G{\'a}bor},title={Covering lattice points by subspaces},date={2001},journal={Period. Math. Hungar.},volume={43},number={1-2},pages={93--103}}
\bib{BezdekHausel}{incollection}{author={Bezdek, K\'aroly},author={Hausel, Tam\'as},title={On the number of lattice hyperplanes which are needed to cover the lattice points of a convex body},book={title={Intuitive Geometry (Szeged, 1991)},series={Colloq. Math. Soc. J\'anos Bolyai},volume={63},publisher={North-Holland},address={Amsterdam},date={1994}},pages={27--31}}
\bib{BezdekLitvak}{article}{author={Bezdek, K\'aroly},author={Litvak, Alexander E.},title={Covering convex bodies by cylinders and lattice points by flats},date={2009},journal={J. Geom. Anal.},volume={19},number={2},pages={233--243}}
\bib{BombieriGubler}{book}{author={Bombieri, Enrico},author={Gubler, Walter},title={Heights in {D}iophantine geometry},series={New Mathematical Monographs},volume={4},publisher={Cambridge University Press},address={Cambridge},date={2006}}
\bib{BugeaudGyory}{article}{author={Bugeaud, Yann},author={Gy{\H{o}}ry, K{\'a}lm{\'a}n},title={Bounds for the solutions of unit equations},date={1996},journal={Acta Arith.},volume={74},number={1},pages={67--80}}
\bib{Cassels}{book}{author={Cassels, J.~W.~S.},title={An introduction to the geometry of numbers},series={Grundlehren der mathematischen Wissenschaften},volume={99},publisher={Springer-Verlag},address={Berlin--G{\"o}ttingen--Heidelberg},date={1959},note={Reprinted in Classics in Mathematics, Springer, Berlin, 1997}}
\bib{ChristensenGubler}{article}{author={Christensen, Christian},author={Gubler, Walter},title={Der relative Satz von Schanuel},date={2008},journal={Manuscripta Math.},volume={126},number={4},pages={505--525}}
\bib{FukshanskyAdditional}{article}{author={Fukshansky, Lenny},title={Siegel's lemma with additional conditions},date={2006},journal={J. Number Theory},volume={120},number={1},pages={13--25},doi={10.1016/j.jnt.2005.11.009}}
\bib{GarciaLee}{article}{author={Garcia, Stephan~Ramon},author={Lee, Ethan~Simpson},title={Unconditional explicit Mertens' theorems for number fields and Dedekind zeta residue bounds},date={2022},journal={Ramanujan J.},volume={57},number={3},pages={1169--1191}}
\bib{Gaudron2009}{article}{author={Gaudron, {\'E}ric},title={G\'eom\'etrie des nombres ad\'elique et lemmes de Siegel g\'en\'eralis\'es},date={2009},journal={Manuscripta Math.},volume={130},number={2},pages={159--182}}
\bib{Gaudron}{incollection}{author={Gaudron, {\'E}ric},title={Chapter II: Minima and slopes of rigid adelic spaces},book={title={Arakelov geometry and {D}iophantine applications},editor={Peyre, Emmanuel},editor={R{\'e}mond, Ga{\"e}l},series={Lecture Notes in Mathematics},volume={2276},publisher={Springer},address={Cham},date={2021}},pages={37--76}}
\bib{Neukirch}{book}{author={Neukirch, J{\"u}rgen},title={Algebraic number theory},series={Grundlehren der mathematischen Wissenschaften},publisher={Springer-Verlag},address={Berlin},date={1999},volume={322},translator={Schappacher, Norbert}}
\bib{RoyThunder}{article}{author={Roy, Damien},author={Thunder, Jeffrey~L.},title={An absolute Siegel's lemma},date={1996},journal={J. Reine Angew. Math.},volume={476},pages={1--26},note={Addendum and erratum, J. Reine Angew. Math. 508 (1999), 47--51}}
\bib{Schanuel}{article}{author={Schanuel, Stephen Hoel},title={Heights in number fields},date={1979},journal={Bull. Soc. Math. France},volume={107},pages={433--449}}
\bib{SudakovTomon}{article}{author={Sudakov, Benny},author={Tomon, Istv\'an},title={Evasive sets, covering by subspaces, and point--hyperplane incidences},date={2024},journal={Discrete Comput. Geom.},volume={72},number={3},pages={1333--1347}}
\end{biblist}
\end{bibdiv}
\end{document}